\numberwithin{equation}{section}
\allowdisplaybreaks \allowdisplaybreaks[4]
\newtheorem{theorem}{Theorem}[section]
\newtheorem{remark}{Remark}[section]
\newtheorem{definition}{Definition}[section]
\newtheorem{proposition}{Proposition}[section]
\newtheorem{lemma}{Lemma}[section]
\allowdisplaybreaks \allowdisplaybreaks[4]
\begin{document}

\title[Modified Milstein scheme for SDEs driven by fBm]
{Optimal convergence rate of modified Milstein scheme\\ for SDEs with rough fractional diffusions}

\author{Chuying Huang}
\address{College of Mathematics and Informatics \& FJKLMAA, Fujian Normal University, Fuzhou 350117, PR China}
\curraddr{}
\email{huangchuying@fjnu.edu.cn; huangchuying@lsec.cc.ac.cn}


\subjclass[2010]{Primary 60H35; secondary 60H10, 60L20; 65C30}

\keywords{fractional Brownian motion, modified Milstein scheme, optimal convergence rate, rough path theory, stochastic backward error analysis}

\date{\today}

\dedicatory{}

\begin{abstract}
			We combine the rough path theory and stochastic backward error analysis to develop a new framework for error analysis on numerical schemes. Based on our approach, we prove that the almost sure convergence rate of the modified Milstein scheme for stochastic differential equations driven by multiplicative multidimensional fractional Brownian motion with Hurst parameter $H\in(\frac14,\frac12)$ is $(2H-\frac12)^-$ for sufficiently smooth coefficients, which is optimal in the sense that it is consistent with the result of the corresponding implementable approximation of the L\'evy area of fractional Brownian motion. Our result gives a positive answer to the conjecture proposed in \cite{Deya} for the case $H\in(\frac13,\frac12)$, and reveals for the first time that numerical schemes constructed by a second-order Taylor expansion do converge for the case $H\in(\frac14,\frac13]$.
\end{abstract}

\maketitle


	\section{Introduction}
In this article, we study the stochastic differential equation (SDE) driven by multiplicative multidimensional fractional Brownian motion (fBm)
\begin{align}\label{sde}
	\left\{
	\begin{aligned}
		{\rm d}Y_{t}&=\sigma(Y_{t}){\rm d}B_{t},\quad  t\in(0,T],\\
		Y_{0}&=z\in \mathbb{R}^m,
	\end{aligned}
	\right.
\end{align}
where $\sigma=(\sigma_1,\cdots,\sigma_d):\mathbb{R}^m\rightarrow L(\mathbb{R}^d,\mathbb{R}^m)$, and $B=(B^1,\cdots,B^d)$ is a $d$-dimensional fractional Brownian motion with Hurst parameter $H\in(\frac14,\frac12)$. More precisely, $B$ is a continuous centered Gaussian process characterized by the covariance function
\begin{align*}
	\mathbb{E}\Big[B^i_tB^j_s\Big]=\frac{1}{2}\Big[t^{2H}+s^{2H}-|t-s|^{2H}\Big]\mathds{1}_{\{i=j\}},\quad s,t\in [0,T],\quad i,j=1,\cdots,d.
\end{align*}
The properties for self-similarity, stationary increments and short-range dependence of fBm with Hurst parameter $H\in(\frac14,\frac12)$ lead to considerable practical applications of SDE \eqref{sde} such as the models for interest rates,  stochastic oscillators, circuit simulations, flows in porous media and so on; see e.g.  \cite{CHL17,Denk2007,ER19,FLW20,FZ17,HHW18,JR16,fbm}.

Under this setting, since fBm is not a martingale and the exponent of H\"older continuity of sample paths is $H^-<\frac12$, we interprete the SDE in the sense of rough path developed in \cite{MHFriz,Friz,Gubinelli,Lyons}, instead of the stochastic  It\^o integral for the case $H=\frac12$ or the pathwise fractional calculus for the case $H\in(\frac12,1)$. Based on the rough path theory, the well-posedness and robustness of pathwise solutions of equations driven by signals with $\frac1p$-H\"older regularity is established by smoothing the rough driving signal and applying the Taylor expansion up to $[p]$th-order, where $[p]$ is the integer part of $p\ge 1$. At the level of theoretical analysis, the robustness of solutions with respect to driving signals is fundamental for researches about the density and ergodicity of SDEs driven by non-Markovian stochastic processes \cite{Cass10,Cass15,MH13}. 
At the level of numerical approximation, it suggests intuitively that schemes constructed by a $[p]$th-order Taylor expansion converge for equations driven by signals with $\frac1p$-H\"older regularity.    	For instance, schemes constructed by a second-order Taylor expansion converge for the case $H\in(\frac13,\frac12)$ and schemes constructed by a third-order Taylor expansion converge for the case $H\in(\frac14,\frac13]$. This motivates us to investigate the optimal convergence rate of numerical schemes, to prove which it needs to develop new strategies since the probabilistic properties of fBm are essentially different from those of standard Brownian motion.

Consider a simple equation
\begin{align*}
	\left\{
	\begin{aligned}
		{\rm d}X^1_{t}&={\rm d}B^1_{t},\quad t\in(0,T],\\
		{\rm d}X^2_{t}&=X^1_t{\rm d}B^2_{t},\quad X^1_0=X^2_0=0,
	\end{aligned}
	\right.
\end{align*}
whose exact solution 
$$X^1_t=B^1_t,\quad X^2_t=\int_{0}^{t}\int_{0}^{u}{\rm d}B^1_{v}{\rm d}B^2_{u},$$
shows the L\'evy area of $B^1$ and $B^2$.
Based on a uniform gird $t_k=kh$, $n\in\mathbb{N}_+$, $h=\frac{T}{n}$, a second-order Taylor expansion leads to the Milstein scheme
\begin{align*}
	Z^{n,2}_{t_{k+1}}=&Z^{n,2}_{t_{k}}+\int_{t_k}^{t_{k+1}}Z^{n,1}_{t_k}{\rm d}B^2_{u}
	+\int_{t_k}^{t_{k+1}}\int_{t_k}^{u}{\rm d}B^1_{v}{\rm d}B^2_{u}\\
	=&Z^{n,2}_{t_{k}}+B^1_{t_k}\big(B^2_{t_{k+1}}-B^2_{t_{k}}\big)+\int_{t_k}^{t_{k+1}}\int_{t_k}^{u}{\rm d}B^1_{v}{\rm d}B^2_{u},\quad k=0,\cdots,n-1,
\end{align*}
which includes iterated integrals of fBm. However, due to the dependency of increments of fBm, the simulation for iterated integrals is rather difficult. One implementable method for numerical simulation is to substitute $(B^1,B^2)$ by a piecewise linear interpolation with time step size $h$ to construct a modified version
\begin{align}\label{Z}
	\tilde{Z}^{n,2}_{t_{k+1}}=&\tilde{Z}^{n,2}_{t_{k}}+B^1_{t_k}\big(B^2_{t_{k+1}}-B^2_{t_{k}}\big)+\frac12 \big(B^1_{t_{k+1}}-B^1_{t_{k}}\big)\big(B^2_{t_{k+1}}-B^2_{t_{k}}\big)\nonumber\\
	=&\tilde{Z}^{n,2}_{t_{k}}+\frac12 \big(B^1_{t_{k+1}}+B^1_{t_{k}}\big)\big(B^2_{t_{k+1}}-B^2_{t_{k}}\big),\quad k=0,\cdots,n-1.
\end{align}
In \cite{Neu10}, the authors prove for $H\in(\frac14,1)$ that the mean-square convergence rate of the modified Milstein scheme is $2H-\frac12$.
It is then natural to ask whether the same convergence rate of the modified Milstein scheme 
\begin{align}\label{scheme}
	Y^n_{t_{k+1}}=Y^n_{t_{k}}+\sigma(Y^n_{t_{k}})\Delta B_{{k+1}}
	+\frac12 \sigma'(Y^n_{t_{k}})\sigma(Y^n_{t_{k}}) \Delta B_{{k+1}}^{\otimes 2},\quad k=0,\cdots,n-1,
\end{align}
holds for the general SDE \eqref{sde},  where $\Delta B_{{k+1}}=B_{t_{k+1}}-B_{t_{k}}$.
If $H\in(\frac12,1)$, the question has already been solved in \cite{HHW} with the equation being understood by fractional calculus. If $H=\frac12$, it is actually a classical numerical conclusion for It\^o SDEs driven by standard Brownian motion; see e.g. \cite{MT04}. The contribution of this article is to fill this picture for $H\in(\frac14,\frac12)$.

The main difficulty in this topic is the low regularity of fBm as well as the solution of SDE \eqref{sde}. One alternative way is to use the Wong--Zakai approximation to turn the problem into a random ordinary differential equation. Then the local error of numerical schemes constructed by a second-order Taylor expansion is $3H^-$, that is, three times the exponent of H\"older continuity. Moreover, the robustness of solutions with respect to initial data in rough path theory derives that the pathwise global error is $(3H-1)^-$, which leads to a convergent situation for the case $H>\frac13$. To prove the optimal convergence rate, in this article, we combine the stochastic backward error analysis to further decompose the error between the numerical solution $Y^n$ and the exact solution of the Wong--Zakai approximation into two parts. The first part is the error between $Y^n$ and the exact solution of the associated truncated stochastic modified equation proposed in \cite{CHH}, which is proved to have arbitrary high order, by choosing the truncation number large enough. The second part is the error between the exact solutions of the associated truncated stochastic modified equation and the Wong--Zakai approximation. We rewrite the associated truncated stochastic modified equation as an equivalent equation driven by rough path lifted by a new stochastic process $\tilde{X}^n$ so that extra stochastic cancellation effects in error propagation are explored. Then together with the discrete sewing lemma and the robustness of solutions with respect to the driving signals in rough path theory, we obtain our main result for the whole situation $H\in(\frac14,\frac12)$.

\begin{theorem}\label{main-1}
	Let $1/4<H<1/2$ and $\gamma>3+\frac1H$. If $\sigma\in Lip^{\gamma}$,
	then for any sufficiently small $\epsilon>0$, there exists a random variable $G=G(\epsilon)$ independent of the time step size $h$ such that
	\begin{align*}
		\max_{k=1,\cdots,n}\big\|Y_{t_{k}}-Y^n_{t_{k}}\big\|\le Gh^{\min\{2H-1/2,H-1/\gamma\}-\epsilon},\quad a.s.,
	\end{align*}
	where $Y$ is the exact solution of \eqref{sde} in the sense of rough path and  $Y^n$ is the numerical solution given by the modified Milstein scheme \eqref{scheme}.
\end{theorem}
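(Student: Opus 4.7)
The plan is to follow the decomposition strategy outlined in the introduction: insert an intermediate Wong--Zakai approximation $Y^{WZ,n}$ and an auxiliary truncated stochastic modified equation $\bar Y^n$ between the exact solution $Y$ and the numerical solution $Y^n$, and control each segment with the tool best suited to its regularity.

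First, let $B^n$ denote the piecewise linear interpolation of $B$ on the uniform grid and let $Y^{WZ,n}$ solve the corresponding random ODE $\mathrm{d}Y^{WZ,n}_t=\sigma(Y^{WZ,n}_t)\mathrm{d}B^n_t$. By the universal limit theorem of rough path theory, the It\^o--Lyons map is locally Lipschitz in the $\alpha$-H\"older rough path metric $\rho_\alpha$ for any $\alpha<H$, provided $\sigma\in Lip^{\gamma}$ with $\gamma>1/H$; hence $\|Y-Y^{WZ,n}\|\lesssim \rho_\alpha(\mathbf{B},\mathbf{B}^n)$, where $\mathbf{B}^n$ is the canonical smooth lift of $B^n$. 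Standard fourth-moment estimates on the L\'evy area of fBm, as in \cite{Neu10}, give the almost sure rate $\rho_\alpha(\mathbf{B},\mathbf{B}^n)\lesssim h^{(2H-1/2)-\epsilon}$, and this is the rate that ultimately limits the scheme.

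Next, following \cite{CHH}, I would introduce a truncated stochastic modified equation $\mathrm{d}\bar Y^n_t=\sigma^{N,n}(\bar Y^n_t)\mathrm{d}B^n_t$ with a corrected vector field $\sigma^{N,n}$ whose exact flow reproduces the Milstein iterates at grid points up to an error of order $h^{N(H-\epsilon)}$. Fixing $N$ large enough that $N(H-\epsilon)$ exceeds $2H-1/2$ makes the contribution $\max_k\|\bar Y^n_{t_k}-Y^n_{t_k}\|$ negligible relative to the claimed rate, provided $\sigma$ is smooth enough (which is encoded by the hypothesis $\gamma>3+1/H$).

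The crux is to compare $\bar Y^n$ with $Y^{WZ,n}$. The plan is to rewrite the modified ODE in the equivalent form $\mathrm{d}\bar Y^n_t=\sigma(\bar Y^n_t)\mathrm{d}\tilde X^n_t$, where $\tilde X^n$ is a new driver absorbing the higher-order Taylor corrections into the noise rather than the coefficient. The decisive observation is that the telescoping sums defining the increments of $\tilde X^n-B^n$, together with their second level increments, carry a centered Gaussian structure whose fluctuations can be estimated via the discrete sewing lemma combined with hypercontractivity on a finite Wiener chaos; the resulting rough path distance $\rho_\alpha(\mathbf{B}^n,\tilde{\mathbf{X}}^n)$ is of order $h^{\min\{2H-1/2,H-1/\gamma\}-\epsilon}$, sharply better than the purely pathwise bound $h^{(3H-1)-\epsilon}$ that one would get from a plain Taylor expansion. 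Applying once more the local Lipschitz continuity of the solution map then yields $\|Y^{WZ,n}-\bar Y^n\|$ at the same rate, and summing the three estimates gives the theorem. The hardest part is precisely this last comparison: for $H\le 1/3$ the second-level object lies beyond Young integrability, so the backward-error formalism is needed to expose the Gaussian cancellations that a pathwise third-order Taylor expansion would otherwise require.
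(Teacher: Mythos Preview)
Your three-term decomposition and the tools you invoke (rough path continuity of the It\^o--Lyons map, the truncated stochastic modified equation of \cite{CHH}, discrete sewing plus hypercontractivity) match the paper's proof exactly, so the architecture is correct.

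Two points need correcting. First, the modified equation cannot be written as $\mathrm{d}\bar Y^n_t=\sigma(\bar Y^n_t)\,\mathrm{d}\tilde X^n_t$ with the \emph{original} $\sigma$: the third-order corrections $f_\alpha$, $|\alpha|=3$, are not in the span of $\sigma_1,\dots,\sigma_d$. What the paper does is \emph{augment} the dimension: one introduces an extended vector field $V=(f_\alpha)_{|\alpha|\in\{1,3\}}$ and an extended driver $\tilde X^n$ whose first $d$ components are $x^n$ and whose remaining components are the piecewise-linear processes $\tilde x^{n,\alpha}$ with $|\alpha|=3$; the Wong--Zakai equation is simultaneously rewritten as $V(y^n)\,\mathrm{d}\bar X^n$ with $\bar X^n=(x^n,0,\dots,0)$. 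Only then are the two equations driven by processes of the same dimension with the same coefficient, and Lemma~\ref{lm-1} applies to compare $S_3(\tilde X^n)$ with $S_3(\bar X^n)$. Your phrase ``absorb the corrections into the noise'' is the right intuition, but the augmentation is not optional.

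Second, you place the $H-1/\gamma$ loss in the wrong segment. The comparison $y^n$ versus $\tilde Y^n$ produces $h^{2H-1/2-\epsilon}$ pure; the term $H-1/\gamma$ enters only through the Wong--Zakai estimate $\|Y-y^n\|$ (Lemma~\ref{lm-3}), where the regularity $\gamma$ of $\sigma$ caps how well the Lipschitz dependence on the driver can be exploited. Also, the modified-equation error from \cite{CHH} is $h^{(N+1)H-1-\epsilon}$, not $h^{N(H-\epsilon)}$; with $N=3$ this gives $h^{4H-1-\epsilon}\ge h^{2H-1/2-\epsilon}$ for all $H>1/4$, which is precisely why the hypothesis $\gamma>3+1/H$ is sufficient.
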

Here and in the rest of this article, we denote by $\|\cdot\|$ the Euclidean norm of $\mathbb{R}^d$. We use $C$ as a generic positive constant and $G$ as a generic positive random variable, which may be different from line to line and are all independent of the time step size $h$. 	
Based on Theorem \ref{main-1}, we obtain that the almost sure convergence rate of the modified Milstein scheme is $(2H-\frac12)^-$ for sufficiently smooth $\sigma$. It is optimal in the sense that the convergence rate corresponds to that of scheme \eqref{Z} for approximating the L\'evy area of fBm. This gives a positive answer to the optimal convergence rate conjecture in \cite{Deya} for the case $H\in(\frac13,\frac12)$. The more surprising thing is that the modified Milstein scheme is proved for the first time to be convergent for the case $H\in(\frac14,\frac13]$, and maintain the same convergence rate as that of third-order Taylor schemes. Indeed, our result holds for any implementable schemes constructed by a second-order Taylor expansion and for SDEs with drift terms. Moreover, we believe that our framework is applicable for SDEs driven by general rough signals.

From our procedure, we present a novel application for stochastic backward error analysis in stochastic forward error analysis.  For SDEs driven by standard Brownian motion, there are several types of weak stochastic modified equations with many applications, such as  constructing high weak order numerical schemes, studying invariant measures of numerical schemes and investigating the mathematical mechanism of stochastic symplectic methods for stochastic Hamiltonian systems. We refer to \cite{ACVZ12,2019BIT,DF12,K15BIT,K15IMA,S06,WHS16BIT,WXZ} for interested readers. 
As to numerical schemes constructed by a third-order Taylor expansion for SDEs driven by fBm, the remainder term of the local error between $Y$ and $Y^n$ has higher regularity, and the optimal strong convergence rate is obtained in \cite{MLMC16} for $H\in(\frac14,\frac12)$. For the modified Euler scheme, 
the optimal strong convergence rate when $H\in(\frac12,1)$ and the optimal almost sure convergence rate when $H\in(\frac13,\frac12)$ are proved in \cite{HuEuler} and \cite{AAP2019}, respectively. The optimal strong convergence rate of modified Milstein scheme and modified Euler scheme for rough case $H<\frac12$ is still an open problem. Besides, since the L\'evy area of fBm diverges if $H\le\frac14$, the well-posedness of SDEs in this case is unsolved.

The remainder of the article is structured as follows. The rough path theory and the stochastic backward error analysis are briefly introduced in Section \ref{sec-2}. The core framwork for proving Theorem \ref{main-1} is given in Section \ref{sec-3}. Technique estimates for the new process $\tilde{X}^n$ are proved in Section \ref{sec-4}.

\section{Preliminaries}\label{sec-2}
In Section \ref{sec-2.1}, we review the rough path theory developed in \cite{MHFriz,Friz,Gubinelli,Lyons} and illustrate the solution of SDE \eqref{sde} in the sense of rough path. In Section \ref{sec-2.2}, we introduce the stochastic backward error analysis proposed in \cite{CHH}, which constructs stochastic modified equations utilized in the proof of our main theorem.
\subsection{Rough path theory}\label{sec-2.1}

Let $[p]$ be the integer part of $p\ge 1$. Denote by  $\mathcal{C}^{1\text{-}var}\big([0,T];\mathbb{R}^d\big)$ the space of all continuous paths $x:[0,T]\rightarrow \mathbb{R}^d$ with bounded variation. We define the signature of $x\in\mathcal{C}^{1\text{-}var}\big([0,T];\mathbb{R}^d\big)$ by
\begin{align*}
	S_{[p]}(x)_t=\left(1,\int_{0\leq u_{1}\leq t}{\rm d} x_{u_{1}},\cdots,\int_{0\leq u_{1}<\cdots<u_{[p]}\leq t}{\rm d} x_{u_{1}}\otimes\cdots \otimes {\rm d} x_{u_{[p]}}\right)\in
	\mathbb{R}\oplus \Big(\oplus_{i=1}^{[p]}(\mathbb{R}^{d})^{\otimes i}\Big)
\end{align*}
and the space of all terminal values of signatures by
\begin{align*}
	G^{[p]}(\mathbb{R}^{d})=\Big\{S_{[p]}(x)_T: x\in\mathcal{C}^{1\text{-}var}\big([0,T];\mathbb{R}^d\big)\Big\}.
\end{align*} 
For any $\mathbf{a}=(\mathbf{a}^0,\mathbf{a}^1,\cdots,\mathbf{a}^{[p]}),\mathbf{b}=(\mathbf{b}^0,\mathbf{b}^1,\cdots,\mathbf{b}^{[p]})\in G^{[p]}(\mathbb{R}^{d})$,  the multiplication of $\mathbf{a}$ and $\mathbf{b}$ is given by
\begin{align*}
	\mathbf{a}\otimes \mathbf{b}=(\mathbf{c}^0,\mathbf{c}^1,\cdots,\mathbf{c}^{[p]})\in G^{[p]}(\mathbb{R}^{d}),~ \mathbf{c}^{i}=\sum_{k=0}^{i}\mathbf{a}^{k}\otimes\mathbf{b}^{i-k},\quad i=0,\cdots,[p].
\end{align*}
If $\mathbf{a}\otimes \mathbf{b}=(1,\mathbf{0},\cdots,\mathbf{0})$, then $\mathbf{b}$ is the inverse of $\mathbf{a}$ and denoted by $\mathbf{b}=\mathbf{a}^{-1}$.  In particular, 
\begin{align}\label{sig}
	(S_{[p]}(x)_s)^{-1}\otimes S_{[p]}(x)_t=\left(1,\int_{s\leq u_{1}\leq t}{\rm d} x_{u_{1}},\cdots,\int_{s\leq u_{1}<\cdots<u_{[p]}\leq t}{\rm d} x_{u_{1}}\otimes\cdots \otimes {\rm d} x_{u_{[p]}}\right).
\end{align}
Indeed, $G^{[p]}(\mathbb{R}^{d})$ is the free step-$[p]$ nilpotent Lie group of $\mathbb{R}^{d}$, for which we refer to \cite[Chapter 7]{Friz} for more details.

\begin{definition}
	If a continuous path $\mathbf{X}:[0,T]\rightarrow G^{[p]}(\mathbb{R}^{d})$ satisfies 
	\begin{align*}
		\|\mathbf{X}\|_{p\text{-}var;[0,T]}:=\sup_{\{t_k:k=0,\cdots,K\}\in \mathbb{D}([0,T])}\left(\sum_{k=0}^{K-1} \max_{i=1,\cdots,[p]}\big\|(\mathbf{X}^{-1}_{t_{k}}\otimes\mathbf{X}_{t_{k+1}})^i\big\|^\frac{p}{i}\right)^{\frac1p}
		<\infty,
	\end{align*}
	where $\mathbb{D}([0,T])$ is the set of all partitions of $[0,T]$,
	then we denote $\mathbf{X}\in \mathcal{C}^{p\text{-}var}\big([0,T];G^{[p]}(\mathbb{R}^{d})\big)$ and $\mathbf{X}$ is called a rough path.
	Furthermore,  a rough path $\mathbf{X}$ is of  H\"older-type, if 
	\begin{align*}
		\|\mathbf{X}\|_{\frac1p;[0,T]}:=\sup_{0\le s<t\le T}\frac{\max_{i=1,\cdots,[p]}\big\|(\mathbf{X}^{-1}_{s}\otimes\mathbf{X}_{t})^i\big\|^\frac1i}{|t-s|^{\frac1p}}< \infty.
	\end{align*}
\end{definition}

Since the rough path takes value in $G^{[p]}(\mathbb{R}^{d})$  instead of $\mathbb{R}^{d}$, it provides enough information to determine the well-posedness and robustness of solutions of equations with less regular driving signals. More precisely, 
for the rough differential equation (RDE)
\begin{align}\label{rde}
	\left\{
	\begin{aligned}
		{\rm d}Y_{t}&=\sigma(Y_{t}){\rm d}\mathbf{X}_{t},\quad t\in(0,T],\\
		Y_{0}&=z\in \mathbb{R}^{m},
	\end{aligned}
	\right.
\end{align}
the definition, well-posedness and robustness of its solution are stated as follows.

\begin{definition}\label{solution}
	Let $\mathbf{X} \in \mathcal{C}^{p\text{-}var}\big([0,T];G^{[p]}(\mathbb{R}^{d})\big)$. Suppose that for $n\in\mathbb{N}_+$ and $x^n\in\mathcal{C}^{1\text{-}var}\big([0,T];\mathbb{R}^{d}\big)$, 
	$y^{n}$ is the solution of 
	\begin{align}\label{equ-xn}
		\left\{
		\begin{aligned}
			{\rm d}y^{n}_{t}&=\sigma(y^{n}_{t}){\rm d}x^{n}_{t},\quad t\in(0,T],\\
			y^{n}_{0}&=z,
		\end{aligned}
		\right.
	\end{align}
	in the sense of Riemann--Stieltjes integral.
	If it holds that
	\begin{align}
		&\lim_{n\rightarrow \infty}	\sup_{0\le t\le T}\|y^{n}_t-Y_t\|=0,\label{yn}\\
		&\sup_{n\in \mathbb{N}_+}\|S_{[p]}(x^{n})\|_{p\text{-}var;[0,T]} < \infty,\label{xn1}\\
		&\lim_{n\rightarrow \infty} \sup_{0\leq s<t\leq T} \max_{i=1,\cdots,[p]}\big\|(S_{[p]}(x^{n})^{-1}_{t}\otimes S_{[p]}(x^{n})_{s}\otimes\mathbf{X}^{-1}_{s}\otimes\mathbf{X}_{t})^i\big\|^\frac1i=0,\label{xn2}
	\end{align}
	then $Y$ is called a solution of RDE \eqref{rde}.
\end{definition}

\begin{remark}
	For equations driven by paths in $\mathcal{C}^{1\text{-}var}\big([0,T];\mathbb{R}^d\big)$, the solution defined in Definition \ref{solution} equals to that in the sense of Riemann--Stieltjes integral.
\end{remark}

\begin{definition}\label{Lip}
	Let $\gamma>0$. Denote by $\lfloor \gamma \rfloor$ the largest integer such that  $\lfloor \gamma \rfloor<\gamma$. If $\sigma$ is $\lfloor \gamma \rfloor$-order differentiable, and there exists a  constant $C$ such that
	\begin{align*}
		\left\{
		\begin{aligned}
			&\big\|\sigma^{(k)}(y)\big\|\leq C,\quad k=0,\cdots, \lfloor \gamma \rfloor,~\ y \in \mathbb{R}^{m},\\
			&\big\|\sigma^{(\lfloor \gamma \rfloor)}(y_{1})-\sigma^{(\lfloor \gamma \rfloor)}(y_{2})\big\|\leq C \|y_{1}-y_{2}\|^{\gamma - \lfloor \gamma \rfloor},\quad y_{1},y_{2}\in \mathbb{R}^{m},
		\end{aligned}
		\right.
	\end{align*}
	where  $\sigma^{(k)}$ is the $k$th derivative of $\sigma$, then we say $\sigma \in Lip^{\gamma}$ and denote by $\|\sigma\|_{Lip^{\gamma}}$ the smallest constant satisfying the above inequalities.
\end{definition}

\begin{lemma}\label{lm-1}\rm{(\cite[Theorem 10.26]{Friz})}.
	Let $\mathbf{X}\in \mathcal{C}^{p\text{-}var}\big([0,T];G^{[p]}(\mathbb{R}^{d})\big)$ and $\{x^n\}_{n=1}^{\infty}\subseteq\mathcal{C}^{1\text{-}var}\big([0,T];\mathbb{R}^{d}\big)$ satisfy \eqref{xn1}-\eqref{xn2}.
	If $\sigma\in Lip^{\gamma}$, $\gamma>p$, 
	then RDE \eqref{rde} admits a unique solution. 
	Moreover, given another RDE
	\begin{align*}
		\left\{
		\begin{aligned}
			{\rm d}\tilde{Y}_{t}&=\tilde{\sigma}(\tilde{Y}_{t}){\rm d}\tilde{\mathbf{X}}_{t},\\
			\tilde{Y}_{0}&=\tilde{z},
		\end{aligned}
		\right.
	\end{align*}
	with $\tilde{\mathbf{X}} \in \mathcal{C}^{p\text{-}var}\big([0,T];G^{[p]}(\mathbb{R}^{d})\big)$ and $\tilde{\sigma} \in Lip^{\gamma}$, we have
	\begin{align*}
		\sup_{0\le t\le T}\|Y_t-\tilde{Y}_t\|\le C\exp\big\{C\omega(0,T)\big\}\Big[\|z-\tilde{z}\|+\|\sigma-\tilde{\sigma}\|_{Lip^{\gamma-1}}+U(\mathbf{X},\tilde{\mathbf{X}})\Big],
	\end{align*}	
	where $C=C(\gamma,p,T,\|\sigma\|_{Lip^{\gamma}},\|\tilde{\sigma}\|_{Lip^{\gamma}})$,  $\omega(s,t)=\|\mathbf{X}\|^p_{p\text{-}var;[s,t]}+\|\tilde{\mathbf{X}}\|^p_{p\text{-}var;[s,t]}$ and 
	\begin{align*}
		U(\mathbf{X},\tilde{\mathbf{X}})=\max_{i=1,\cdots,[p]} \sup_{0\leq s<t\leq T} \frac{\big\|(\mathbf{X}^{-1}_s\otimes\mathbf{X}_t)^i
			-(\tilde{\mathbf{X}}^{-1}_s\otimes\tilde{\mathbf{X}}_t)^i\big\|}{\omega(s,t)^{\frac{i}{p}}}.
	\end{align*}
\end{lemma}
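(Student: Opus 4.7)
The plan is to prove this as the classical Universal Limit Theorem of Lyons in the $p$-variation formulation due to Friz--Victoir, combining an a priori estimate for the local solution increment with a Gronwall iteration driven by a control function.

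First I would establish the local building block: for a rough path $\mathbf{X}$ and any $s<t$ with $\omega(s,t)\le 1$, the candidate solution increment admits a ``Davie expansion''
\[
Y_t-Y_s\approx \sum_{i=1}^{[p]}\sigma^{\otimes i}(Y_s)(\mathbf{X}_s^{-1}\otimes\mathbf{X}_t)^i,
\]
with remainder controlled by $\omega(s,t)^{\theta/p}$ for some $\theta>1$. Because $\sigma\in Lip^\gamma$ with $\gamma>p$, the Taylor remainder of $\sigma$ is of order $\lfloor\gamma\rfloor+1>p$, which is precisely the gain needed so that the ``sewing'' defect of the local expansion across a partition refinement telescopes to zero. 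I would use the approximating smooth paths $x^n$ from Definition \ref{solution} together with uniform bounds on $\|S_{[p]}(x^n)\|_{p\text{-var}}$ (condition \eqref{xn1}) to pass to the limit and verify that the Riemann--Stieltjes solutions $y^n$ converge to a function $Y$ whose increments satisfy the Davie expansion above; existence then follows from \eqref{yn}, and any two solutions must agree on each small interval by the a priori estimate, which yields uniqueness.

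Next I would set up the continuity estimate. Introduce the joint control $\omega(s,t)=\|\mathbf{X}\|_{p\text{-var};[s,t]}^p+\|\tilde{\mathbf{X}}\|_{p\text{-var};[s,t]}^p$ and greedily partition $[0,T]$ into a finite number $N\lesssim\omega(0,T)$ of intervals on which $\omega(\tau_k,\tau_{k+1})\le 1$. On each such interval I would subtract the Davie expansions for $Y$ and $\tilde Y$ termwise, isolating three sources of discrepancy:
\begin{align*}
\sigma^{\otimes i}(Y_s)(\mathbf{X}_s^{-1}\otimes\mathbf{X}_t)^i-\tilde\sigma^{\otimes i}(\tilde Y_s)(\tilde{\mathbf{X}}_s^{-1}\otimes\tilde{\mathbf{X}}_t)^i
\end{align*}
splits into (a) a ``signal'' term bounded by $\omega(s,t)^{i/p}U(\mathbf{X},\tilde{\mathbf{X}})$, (b) a ``vector field'' term bounded by $\omega(s,t)^{i/p}\|\sigma-\tilde\sigma\|_{Lip^{\gamma-1}}$, and (c) a ``trajectory'' term bounded by $\omega(s,t)^{i/p}\|Y_s-\tilde Y_s\|$, using the $Lip^\gamma$ assumption on $\sigma,\tilde\sigma$ to Lipschitz-bound their tensor powers. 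A discrete Gronwall lemma applied along the greedy partition then converts the cumulative error into $\exp\{C\omega(0,T)\}$ times the sum $\|z-\tilde z\|+\|\sigma-\tilde\sigma\|_{Lip^{\gamma-1}}+U(\mathbf{X},\tilde{\mathbf{X}})$.

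The main obstacle, as in all proofs of this theorem, is the bookkeeping of the sewing/remainder estimate at step~(c): one must bound the difference of two Davie expansions \emph{and} the difference of two sewing remainders simultaneously, since naively applying the triangle inequality loses the critical cancellation between the $[p]$th level of $\mathbf{X}$ and that of $\tilde{\mathbf{X}}$. The standard device is to introduce a joint rough path on the product space carrying both $\mathbf{X}$ and $\tilde{\mathbf{X}}$ and to apply the abstract sewing lemma to the pair, so that the gain $\theta>1$ is preserved in the difference; combined with the greedy-partition Gronwall step this yields the quoted estimate.
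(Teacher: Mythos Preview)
The paper does not prove this lemma at all: it is stated as a citation of \cite[Theorem 10.26]{Friz} and used as a black box. Your sketch is a faithful outline of the Friz--Victoir argument behind that citation---Davie-type local expansion, sewing to control the remainder, greedy partition by the control $\omega$, and a discrete Gronwall step---so there is nothing to compare against in the paper itself; your approach is the standard one and is correct in spirit.
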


For fBm with $H\in(\frac14,\frac12)$, let $\{x^n\}_{n=1}^{\infty}$ be a sequence of piecewise linear interpolations of $B$, i.e., 
\begin{align}\label{xh}
	x^{n}_{t}= x^{n}_{t_{k}}+(t-t_{k})h^{-1}\Delta B_{k+1},\quad h=\frac{T}{n}, \quad t \in (t_{k},t_{k+1}],\quad k=0,\cdots,n-1. 
\end{align}
Then $B$ is naturally lifted to a rough path $\mathbf{B}$ as a limit of $S_3(x^n)$, which is showed in the next lemma.

\begin{lemma}\label{lm-2}\rm{(\cite[Proposition 15.5 and Theorem 15.33]{Friz})}.
	Let $1/4<H<1/2$. Then for any $p>1/H$, the piecewise linear interpolations $\{x^n\}_{n=1}^{\infty}$ of $B$, which are defined in \eqref{xh}, satisfy
	\begin{align}\label{B1}
		\sup_{n\in \mathbb{N}_+}\|S_{3}(x^{n})\|_{\frac1p;[0,T]} < \infty,\quad a.s.
	\end{align}
	Moreover,  there exists a $\mathcal{C}^{p\text{-}var}\big([0,T];G^{[p]}(\mathbb{R}^{d})\big)$-valued random variable   $\mathbf{B}$ of  H\"older-type such that 
	\begin{align}\label{B2}
		\lim_{n\rightarrow \infty}\sup_{\{t_k:k=0,\cdots,K\}\in \mathbb{D}([0,T])}\left(\sum_{k=0}^{K-1} \max_{i=1,\cdots,3}\big\|(S_{3}(x^{n})^{-1}_{t_{k+1}}\otimes S_{3}(x^{n})_{t_{k}}\otimes\mathbf{B}^{-1}_{t_{k}}\otimes\mathbf{B}_{t_{k+1}})^i\big\|^\frac{p}{i}\right)^{\frac1p}=0,\quad a.s.
	\end{align}
\end{lemma}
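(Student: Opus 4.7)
The plan is to invoke the general Gaussian rough paths machinery of Friz--Victoir. The key structural fact is that, for each level $i\in\{1,2,3\}$ and each fixed $0\le s<t\le T$, the increment $(S_3(x^n))^i_{s,t}:=(S_3(x^n)^{-1}_{s}\otimes S_3(x^n)_{t})^i$ is an iterated Wiener--Stieltjes integral against a Gaussian field, and therefore lives in the inhomogeneous Wiener chaos of order at most $i$ over $B$. By Gaussian hypercontractivity, all $L^q$-moments of elements of a fixed chaos are controlled by the $L^2$-moment (with constants depending only on $i,q$), so the whole problem reduces to second-moment computations. Given (a) uniform bounds $\mathbb{E}\|(S_3(x^n))^i_{s,t}\|^2\le C|t-s|^{2iH}$ and (b) a Cauchy estimate for the sequence $(S_3(x^n))^i$ in the same norm, a Kolmogorov-type criterion for rough paths then delivers both \eqref{B1} and \eqref{B2}.

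The central analytic object is the 2D $\rho$-variation of the covariance $R(s,t)=\frac12(s^{2H}+t^{2H}-|t-s|^{2H})$. A direct computation shows $R$ has finite 2D $\rho$-variation with $\rho=\frac{1}{2H}$, and the hypothesis $H>\frac14$ is exactly what makes $\rho<2$; this is the threshold past which iterated 2D integrals up to level $3$ are well defined in the Young sense. The piecewise linear interpolation $x^n$ has covariance $R^n(s,t)=\mathbb{E}[x^n_s x^n_t]$, which is the bilinear piecewise-linear projection of $R$ onto the grid of mesh $h$; an elementary per-rectangle estimate shows that $R^n$ inherits the same 2D $\rho$-variation bound with a constant independent of $n$.

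From here I would first establish the uniform second-moment estimate
\begin{align*}
\mathbb{E}\bigl\|(S_3(x^n))^i_{s,t}\bigr\|^2\le C\,|t-s|^{2iH},\qquad i=1,2,3,
\end{align*}
by expanding each iterated integral as a $2i$-fold Stieltjes integral against $R^n$ over $[s,t]^{2i}$ and bounding it by $\|R^n\|^{i}_{\rho\text{-var};[s,t]^2}$. Hypercontractivity then upgrades this to arbitrary moments, and the Kolmogorov criterion for rough paths (Friz--Victoir Theorem A.13 / Theorem 15.33) produces a single random variable $K$ with $\|S_3(x^n)\|_{\frac1p;[0,T]}\le K$ almost surely, uniformly in $n$, which is precisely \eqref{B1}.

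For \eqref{B2}, I would derive the analogous Cauchy-in-$n$ estimate at the covariance level: the 2D $\rho$-variation of $R^n-R^m$ tends to zero as $n\wedge m\to\infty$, with a quantitative rate obtained by separating rectangles that straddle a grid point from those lying inside a single grid cell. Combined again with hypercontractivity, this forces $\{S_3(x^n)\}$ to be almost surely Cauchy in the $p$-variation rough-path metric for every $p>1/H$; the limit is the enhanced fractional Brownian motion $\mathbf{B}$, whose H\"older-type property follows at once from the uniform bound obtained in the previous step. The main obstacle is the uniform 2D $\rho$-variation analysis of $R^n$ and $R^n-R^m$: obtaining grid-independent constants and the correct Cauchy rate is the technical heart of the argument, since coarser estimates on the bilinear interpolant easily lose the sharp $H$-dependence that makes $\rho<2$ critical.
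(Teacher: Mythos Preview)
The paper does not supply its own proof of this lemma; it is quoted verbatim as a citation of \cite[Proposition 15.5 and Theorem 15.33]{Friz}. Your proposal is a faithful sketch of precisely the Friz--Victoir machinery that lies behind those cited results: finite 2D $\rho$-variation of the covariance with $\rho=1/(2H)<2$, uniform $L^2$-moment bounds on the iterated integrals of $x^n$ inherited from the $\rho$-variation of the piecewise-linear covariance $R^n$, hypercontractivity to reach all moments, and a Kolmogorov-type criterion for rough paths to obtain both the uniform H\"older bound and the Cauchy property in $p$-variation. So your approach is essentially the same as the one the paper defers to, and is correct at the level of detail given.
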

As a consequence, for almost all sample paths, we interprete the solution of SDE \eqref{sde} as that of the following RDE
\begin{align}\label{rde1}
	\left\{
	\begin{aligned}
		{\rm d}Y_{t}&=\sigma(Y_{t}){\rm d}\mathbf{B}_{t},\quad t\in(0,T],\\
		Y_{0}&=z\in \mathbb{R}^{m}.
	\end{aligned}
	\right.
\end{align}
Furthermore, since \eqref{B1}-\eqref{B2} implies \eqref{xn1}-\eqref{xn2}, it is motivated to investigate the convergence of the solution of the Wong--Zakai approximation
\begin{align}\label{WZK}
	\left\{
	\begin{aligned}
		{\rm d}y^n_{t}&=\sigma(y^n_{t}){\rm d}x^n_{t},\quad  t\in(0,T],\\
		y^n_{0}&=z\in \mathbb{R}^m,
	\end{aligned}
	\right.
\end{align}
where $x^n$ is defined by \eqref{xh}. 

\begin{lemma}\label{lm-3}\rm{(\cite[Theorem 6 and Corollary 8]{Wzk})}.
	Let $1/4<H<1/2$ and $\gamma>1/H$.
	If $\sigma\in Lip^{\gamma}$,
	then for any sufficiently small $\epsilon>0$, there exists a random variable $G=G(\epsilon)$ independent of the time step $h$ such that
	\begin{align*}
		\sup_{0\le t\le T}\big\|Y_t-y^n_t\big\|\le Gh^{\min\{2H-1/2,H-1/\gamma\}-\epsilon},\quad a.s.,
	\end{align*}
	where  $Y$ and  $y^n$ are the exact solutions of SDE \eqref{sde} and the Wong--Zakai approximation \eqref{WZK}, respectively.
\end{lemma}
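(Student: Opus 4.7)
The plan is to derive the estimate as a direct consequence of the rough-path stability result, Lemma \ref{lm-1}. I would apply it with $\tilde\sigma=\sigma$, $\tilde z=z$, $\mathbf{X}=\mathbf{B}$, and $\tilde{\mathbf{X}}=S_{[p]}(x^n)$, where $p$ is chosen in $(1/H,\gamma)$ (the lower bound so that Lemma \ref{lm-2} controls $\mathbf{B}$ in $\mathcal{C}^{p\text{-}var}$, the upper bound so that $\sigma\in Lip^{\gamma}$ is admissible). Because $x^n$ has bounded variation, the Remark following Definition \ref{solution} identifies the Riemann--Stieltjes solution $y^n$ of \eqref{WZK} with the RDE solution driven by the canonical lift $S_{[p]}(x^n)$. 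Lemma \ref{lm-1} then gives
\begin{align*}
\sup_{0\le t\le T}\|Y_t-y^n_t\|\le C\exp\{C\omega(0,T)\}\cdot U\big(\mathbf{B},S_{[p]}(x^n)\big),
\end{align*}
whose prefactor is almost surely finite and uniformly bounded in $n$ by Lemma \ref{lm-2}. The problem thus reduces to controlling the rough-path distance $U(\mathbf{B},S_{[p]}(x^n))$ level by level.

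Next I would establish uniform moment estimates for the level-$i$ differences
\begin{align*}
\Delta^i_{s,t}:=\big(\mathbf{B}_s^{-1}\otimes\mathbf{B}_t\big)^i-\big(S_{[p]}(x^n)_s^{-1}\otimes S_{[p]}(x^n)_t\big)^i,\qquad i=1,\ldots,[p].
\end{align*}
For $i=1$ this is the classical piecewise-linear interpolation error of fBm and, by self-similarity, stationary increments and Gaussianity, admits a bound of the form $\mathbb{E}\|\Delta^1_{s,t}\|^q\le C\,h^{\alpha_1 q}|t-s|^{\beta_1 q}$ with $\alpha_1+\beta_1$ matching the Hölder regularity $H$ and with an improvement of order $2H-1/2$ coming from the $h$-scale. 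For $i=2,3$ the iterated integrals of $x^n$ unfold into bilinear or trilinear forms in the Gaussian increments $\Delta B_{k+1}$; matching them against the corresponding iterated integrals of $\mathbf{B}$ and systematically exploiting stationarity of the fBm covariance together with integration by parts against the Cameron--Martin norm produces the same $h^{(2H-1/2)q}$ gain. A Garsia--Rodemich--Rumsey / Kolmogorov chaining argument then upgrades these moment bounds into an almost sure H\"older-type estimate on $U(\mathbf{B},S_{[p]}(x^n))$.

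Finally, to reach the sharp exponent $\min\{2H-1/2,\,H-1/\gamma\}$ I would optimise in the choice of $p$. The denominator $\omega(s,t)^{i/p}$ inside $U$ prices small scales at $|t-s|^{1/p}$, so the ceiling $h^{2H-1/2}$ is attained cleanly only when $1/p$ can be pushed arbitrarily close to $H$; the constraint $p<\gamma$ enforced by Lemma \ref{lm-1} prevents this when $\gamma$ is near $1/H$, and the best admissible choice $1/p\approx H-1/\gamma-\epsilon''$ produces the secondary rate $H-1/\gamma$. Combining the two contributions and absorbing all implicit constants into a single random variable $G=G(\epsilon)$ completes the proof. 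The main obstacle is the sharp level-$2$ and level-$3$ moment estimate: a purely pathwise bound based on H\"older regularity of $x^n$ yields only $O(h^{3H-1})$, and reaching the genuine $O(h^{2H-1/2})$ rate requires careful tracking of the cancellations intrinsic to piecewise-linear approximation of a Gaussian stationary-increment process.
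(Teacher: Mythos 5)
The statement you are proving is not actually proved in this paper: it is imported verbatim from the cited reference \cite{Wzk} (Theorem 6 and Corollary 8 there), and your skeleton --- reduce to the rough-path distance $U(\mathbf{B},S_{[p]}(x^n))$ via the local Lipschitz continuity of the It\^o--Lyons map (Lemma \ref{lm-1}), identify $y^n$ with the RDE solution driven by the canonical lift of the bounded-variation path $x^n$, prove $L^q$ bounds on the level-$i$ differences, and upgrade to almost sure H\"older bounds by Kolmogorov/Garsia--Rodemich--Rumsey --- is exactly the architecture of that reference. So the soft part of your plan is sound and matches the source the paper relies on. The problem is that the entire content of the lemma sits in the one step you defer: the claim that the level-$2$ and level-$3$ differences satisfy $\mathbb{E}\|\Delta^i_{s,t}\|^q\le C h^{(2H-1/2)q}|t-s|^{\beta_i q}$. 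You correctly observe that a pathwise H\"older argument only gives $O(h^{3H-1})$ (which is worse than $2H-\tfrac12$ precisely for $H<\tfrac12$) and that genuine probabilistic cancellation is needed, but ``stationarity of the covariance together with integration by parts against the Cameron--Martin norm'' is not an argument. In \cite{Wzk} this estimate is the whole paper: it requires the finite two-dimensional $\rho$-variation of the fBm covariance $R(s,t)$ (with $\rho=\tfrac{1}{2H}$) and multidimensional Young integration against $R$ to compute second moments of the discrepancy in the iterated integrals. As written, your proposal asserts the theorem's key inequality rather than proving it.

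Two smaller points in the optimisation over $p$ should also be fixed. The level-one contribution is $h^{H-1/p}$, which improves as $1/p$ \emph{decreases}; the ceiling $h^{2H-1/2}$ is reached once $1/p\le \tfrac12-H$, not by ``pushing $1/p$ arbitrarily close to $H$'' (that is the wrong direction and would kill the rate). Likewise the binding constraint $p<\gamma$ forces $1/p>1/\gamma$, so the best admissible choice is $1/p\approx 1/\gamma+\epsilon''$, yielding the rate $H-1/\gamma-\epsilon''$; your statement ``$1/p\approx H-1/\gamma-\epsilon''$'' conflates the parameter with the resulting exponent. Neither slip affects the final answer $\min\{2H-\tfrac12,\,H-1/\gamma\}-\epsilon$, but as written the optimisation paragraph does not parse.
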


\subsection{Stochastic backward error analysis}\label{sec-2.2}
After applying certain numerical scheme to SDE \eqref{sde} and fixing the numerical solution $Y^n$, the goal of the stochastic backward error analysis is to evaluate the properties of $Y^n$ via a stochastic modified equation whose exact solution is much more close to $Y^n$ than $Y$. 
To this end, we assume that the numerical solution satisfies the expansion
\begin{align}\label{g}
	Y^n_{t_{k+1}}=Y^n_{t_{k+1}}+\sum_{|\alpha|=1}^\infty g_\alpha(Y^n_{t_{k}})(\Delta B^{1}_{k+1})^{\alpha_1}\cdots(\Delta B^{d}_{k+1})^{\alpha_d},
\end{align}
where $\alpha=(\alpha_1,\cdots,\alpha_d)\in(\mathbb{N}_+)^d$ and $|\alpha|=\alpha_1+\cdots+\alpha_d$. 
In \cite{CHH}, we construct the stochastic modified equation in the form of
\begin{align}\label{modified}
	\left\{
	\begin{aligned}
		\dot{\tilde{y}}^n_{t}
		&=\sum_{|\alpha|=1}^{\infty}f_\alpha(\tilde{y}^n_{t})h^{-1}(\Delta B^{1}_{k+1})^{\alpha_1}\cdots(\Delta B^{d}_{k+1})^{\alpha_d},\\
		\tilde{y}^n_{0}&=z,\quad t\in(t_k,t_{k+1}],\quad k=0,\cdots,n-1,
	\end{aligned}
	\right.
\end{align}
with the solution $\tilde{y}$ continuous on $[0,T]$. 
The coefficients $f_\alpha$, $|\alpha|=1$ are defined by an equivalent form of the Wong--Zakai approximation, which is
\begin{align}\label{wzk}
	\left\{
	\begin{aligned}
		\dot{y}^n_{t}
		&=\sum_{l=1}^{d}\sigma_l(y^n_{t}){\rm d}x^{n,l}_{t}=:\sum_{|\alpha|=1}f_\alpha(y^n_{t})h^{-1}(\Delta B^{1}_{k+1})^{\alpha_1}\cdots(\Delta B^{d}_{k+1})^{\alpha_d},\\
		y^n_{0}&=z,\quad t\in(t_k,t_{k+1}], ,\quad k=0,\cdots,n-1.
	\end{aligned}
	\right.
\end{align}
The coefficients $f_\alpha$, $|\alpha|\ge 2$ are determined by the iteration
\begin{align}\label{confj}
	f_\alpha(y)&=g_\alpha(y)-\sum_{i=2}^{|\alpha|}\frac{1}{i!}\sum_{(k^{i,1},\cdots,k^{i,i})\in O^\alpha_i}(D_{k^{i,1}}\cdots D_{k^{i,i-1}}f_{k^{i,i}})(y),\quad |\alpha|\ge 2,
\end{align}
where $$(D_{k^{i_1,i_2}}u)(y)=u'(y)f_{k^{i_1,i_2}}(y), \quad k^{i_1,i_2}=(k^{i_1,i_2}_1,\cdots,k^{i_1,i_2}_d)\in\mathbb{N}^{d},\quad |k^{i_1,i_2}|\ge 1,$$ and 
\begin{align*}
	O^\alpha_i=\Big\{ (k^{i,1},\cdots,k^{i,i}):  &k^{i,1},\cdots,k^{i,i}\in\mathbb{N}^{d},~|k^{i,1}|,\cdots,|k^{i,i}|\ge 1,\nonumber\\
	&k^{i,1}_l+\cdots+k^{i,i}_l=\alpha_l,~l=1,\cdots,d
	\Big\}.
\end{align*}
Then by comparing the Taylor expansion of $\dot{\tilde{y}}^n_{t_k}$ and $Y^n_{t_{k}}$, we have $$\dot{\tilde{y}}^n_{t_k}=Y^n_{t_{k}},\quad a.s.$$ 
Furthermore, given a fixed truncation number $N\in\mathbb{N}_+$, the truncated stochastic modified equation is defined by
\begin{align}\label{TME}
	\left\{
	\begin{aligned}
		\dot{\tilde{Y}}^n_{t}
		&=\sum_{|\alpha|=1}^{N}f_\alpha(\tilde{Y}^n_{t})h^{-1}(\Delta B^{1}_{k+1})^{\alpha_1}\cdots(\Delta B^{d}_{k+1})^{\alpha_d},\\
		\tilde{Y}^n_{0}&=z,\quad t\in(t_k,t_{k+1}],\quad k=0,\cdots,n-1,
	\end{aligned}
	\right.
\end{align}
with the solution $\tilde{Y}^n$ continuous on $[0,T]$. 

\begin{lemma}\label{lm-4}\rm{(\cite[Theorem 4.1]{CHH})}.
	Let $1/4<H<1/2$, $N>\frac1H-1$ and $\gamma>N+1/H$. If $\sigma\in Lip^{\gamma}$,
	then for any sufficiently small $\epsilon>0$, there exists a random variable $G=G(\epsilon)$ independent of the time step $h$ such that
	\begin{align*}
		\max_{k=1,\cdots,n}\big\|Y^n_{t_{k}}-\dot{\tilde{Y}}^n_{t_k}\big\|\le Gh^{(N+1)H-1-\epsilon},\quad a.s.,
	\end{align*}
	where  $Y^n$ is obtained by applying a numerical scheme to SDE \eqref{sde} such that  \eqref{g} holds,  and  $\dot{\tilde{Y}}^n$ is the exact solution of the associated truncated stochastic modified equation  \eqref{TME}.
\end{lemma}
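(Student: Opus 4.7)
Plan. The coefficients $f_\alpha$ are constructed in \eqref{confj} precisely so that the formal Taylor expansion in $h$ of the flow of the full (non-truncated) modified equation \eqref{modified} reproduces the numerical-scheme expansion \eqref{g} multi-index by multi-index; this is the content of the identity $\dot{\tilde{y}}^n_{t_k}=Y^n_{t_k}$ stated just after \eqref{confj}. Truncating the right-hand side of \eqref{modified} at $|\alpha|\le N$ breaks this identity only in multi-indices of weight $\ge N+1$. The strategy is therefore the standard local-error/global-error decomposition: (i) bound the missing high-order contributions on each step by $\|\Delta B_{k+1}\|^{N+1}$, (ii) propagate through the $n=T/h$ steps using a discrete stability estimate for \eqref{TME}, and (iii) sum the resulting terms using the almost sure H\"older regularity of fBm.

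Step 1 (local error). Fix $k$ and compare the truncated flow to the full flow on $[t_k,t_{k+1}]$, both started from $Y^n_{t_k}$. Iterating the vector field $F_k(y)=\sum_{|\alpha|=1}^N f_\alpha(y)h^{-1}(\Delta B_{k+1})^\alpha$ in the Picard expansion of \eqref{TME}, I would collect terms by total multi-index weight and verify, using the recursion \eqref{confj}, that all contributions of weight $\le N$ reproduce $\sum_{|\alpha|=1}^N g_\alpha(Y^n_{t_k})(\Delta B_{k+1})^\alpha$ exactly. The leftover iterated terms in the truncated flow, and the $|\alpha|\ge N+1$ tail of \eqref{g}, are both controlled by the classical ODE Taylor remainder under $\sigma\in Lip^\gamma$, $\gamma>N+1/H$, giving a one-step bound of the form $C_N\|\sigma\|_{Lip^\gamma}\|\Delta B_{k+1}\|^{N+1}$ for the state discrepancy and, via the defining ODE \eqref{TME}, for the derivative discrepancy $\|Y^n_{t_k}-\dot{\tilde{Y}}^n_{t_k}\|$ appearing in the lemma.

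Step 2 (global propagation and almost sure estimate). Let $e_k=\|Y^n_{t_k}-\tilde Y^n_{t_k}\|$. The one-step flow map of \eqref{TME} is Lipschitz with local constant bounded by $1+C\sum_{|\alpha|=1}^N\|\Delta B_{k+1}\|^{|\alpha|}$, so a discrete Gronwall iteration gives
$$\max_k e_k \le C_N\|\sigma\|_{Lip^\gamma}\exp\Bigl(C\textstyle\sum_{j=0}^{n-1}\|\Delta B_{j+1}\|\Bigr)\sum_{j=0}^{n-1}\|\Delta B_{j+1}\|^{N+1}.$$
The a.s.\ $(H-\epsilon)$-H\"older regularity of $B$ yields a random $G=G(\epsilon)$ with $\max_j\|\Delta B_{j+1}\|\le Gh^{H-\epsilon}$, and $\sum_j\|\Delta B_{j+1}\|$ bounded $h$-uniformly; plugging in,
$$\sum_{j=0}^{n-1}\|\Delta B_{j+1}\|^{N+1} \le nG^{N+1}h^{(N+1)(H-\epsilon)} = TG^{N+1}h^{(N+1)H-1-(N+1)\epsilon},$$
and absorbing $(N+1)\epsilon$ into $\epsilon$ yields the announced rate. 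The hypothesis $N>1/H-1$ is exactly what makes $(N+1)H-1>0$, so the bound is nontrivial.

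Main obstacle. The delicate point is the algebraic identity in Step 1. Each Picard iteration of $F_k$ produces a sum over ordered compositions of multi-indices that matches the index set $O^\alpha_i$ in \eqref{confj}, and one must verify that factorials, signs, and the correction terms in \eqref{confj} line up to cancel \eqref{g} exactly through $|\alpha|=N$. Once this combinatorial cancellation is in place the quantitative remainder control is routine; the precise requirement $\gamma>N+1/H$ arises from balancing the $N$ derivatives needed for the iterated Lie derivatives against the extra fractional regularity required to absorb the $h^{-1}$ factor in \eqref{TME} against the typical increment size $h^H$.
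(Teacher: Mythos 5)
The paper does not prove this lemma itself; it is imported verbatim from \cite[Theorem 4.1]{CHH}, so there is no in-paper proof to compare against line by line. Your Step 1 (one-step matching of the Taylor coefficients of the truncated modified flow with the scheme expansion \eqref{g} via the recursion \eqref{confj}, leaving a local remainder of order $\|\Delta B_{k+1}\|^{N+1}$) is the right local-error mechanism and is consistent with how the coefficients $f_\alpha$ are constructed.

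The gap is in Step 2, and it is fatal as written. You bound the product of one-step Lipschitz constants by $\exp\bigl(C\sum_{j=0}^{n-1}\|\Delta B_{j+1}\|\bigr)$ and assert that $\sum_j\|\Delta B_{j+1}\|$ is bounded uniformly in $h$. It is not: this sum is the $1$-variation of the piecewise linear interpolation of $B$ along the grid, and since $\|\Delta B_{j+1}\|\asymp h^{H}$ almost surely, it behaves like $n\,h^{H}=T\,h^{H-1}\to\infty$ as $h\to0$ (this already diverges for $H=\tfrac12$, and diverges faster for $H<\tfrac12$). Consequently your Gronwall factor $\exp(CTh^{H-1})$ blows up super-polynomially and the global bound carries no information. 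This is precisely the obstruction that forces the rough-path machinery in this paper: the flow of an equation driven by a rough signal is Lipschitz in its data with constant controlled by $\exp\bigl(C\|\mathbf{X}\|^{p}_{p\text{-}var}\bigr)$ for $p>1/H$ (Lemma \ref{lm-1}), a quantity that is a.s.\ finite and $h$-independent because it exploits cancellations among the oscillating increments, whereas a termwise triangle-inequality Gronwall cannot see any cancellation. To repair the argument you would need to propagate the local errors through a $p$-variation stability estimate --- e.g.\ by lifting the truncated modified equation to an RDE driven by a path built from the $\tilde{x}^{n,\alpha}$, exactly as the paper does in Section \ref{sec-3} for the companion estimate \eqref{key}, or by the discrete sewing lemma (Lemma \ref{lm5}) applied to the two-parameter local-error functional. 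The exponent $(N+1)H-1$ you extract from ``local error times number of steps'' is the correct answer, but the summation step that produces it must be justified by one of these mechanisms rather than by a $1$-variation Gronwall.
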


\section{Proof of Theorem \ref{main-1}}\label{sec-3}
\begin{proof}		
	Noticing that the numerical solution $Y^n$ given by the modified Milstein scheme \eqref{scheme} has the formulation \eqref{g}, which is
	\begin{align}\label{gM}
		Y^n_{t_{k+1}}&=Y^n_{t_{k}}+\sum_{l=1}^d\sigma_l(Y^n_{t_{k}})\Delta B^l_{{k+1}}
		+\frac12 \sum_{l_1,l_2=1}^d\sigma_{l_1}'(Y^n_{t_{k}})\sigma_{l_2}(Y^n_{t_{k}}) \Delta B^{l_1}_{{k+1}}\Delta B^{l_2}_{{k+1}}\nonumber\\
		&=:Y^n_{t_{k+1}}+\sum_{|\alpha|=1}g_\alpha(Y^n_{t_{k}})(\Delta B^{1}_{k+1})^{\alpha_1}\cdots(\Delta B^{d}_{k+1})^{\alpha_d}
		+\sum_{|\alpha|=2}g_\alpha(Y^n_{t_{k}})(\Delta B^{1}_{k+1})^{\alpha_1}\cdots(\Delta B^{d}_{k+1})^{\alpha_d}\nonumber\\
		&=:Y^n_{t_{k+1}}+\sum_{|\alpha|=1}^\infty g_\alpha(Y^n_{t_{k}})(\Delta B^{1}_{k+1})^{\alpha_1}\cdots(\Delta B^{d}_{k+1})^{\alpha_d},
	\end{align}
	we decomposite the error by
	\begin{align*}
		\max_{k=1,\cdots,n}\big\|Y_{t_{k}}-Y^n_{t_{k}}\big\|\le\sup_{0\le t\le T}\big\|Y_t-y^n_t\big\|+\sup_{0\le t\le T}\big\|y^n_t-\tilde{Y}^n_t\big\|+
		\max_{k=1,\cdots,n}\big\|\tilde{Y}^n_{t_{k}}-Y^n_{t_{k}}\big\|,
	\end{align*}
	where $y^n$ is the exact solution of the Wong--Zakai approximation \eqref{WZK} and $\tilde{Y}^n$ is the exact solution of the truncated stochastic modified equation \eqref{TME} with truncation number $N=3$. 
	Applying the iteration \eqref{confj}, we obtain the coefficients of the truncated stochastic modified equation \eqref{TME}:
	\begin{align*}
		\left\{
		\begin{aligned}
			|\alpha|=1,~ \alpha_l=1:~f_\alpha(y)&=\sigma_l(y),\\
			|\alpha|=2:~f_\alpha(y)&=0, \\
			|\alpha|=3:~f_\alpha(y)&=-\frac{1}{6}\sum_{\alpha^1+\alpha^2+\alpha^3=\alpha}
			\Big[f''_{\alpha^1}(y)f_{\alpha^2}(y)f_{\alpha^3}(y)+f'_{\alpha^1}(y)f'_{\alpha^2}(y)f_{\alpha^3}(y)\Big].
		\end{aligned}
		\right.
	\end{align*}
	Based on Lemmas \ref{lm-3}-\ref{lm-4}, for any $\epsilon>0$, there exists a random variable $G=G(\epsilon)$ independent of $h$ such that 
	\begin{align*}
		\sup_{0\le t\le T}\big\|Y_t-y^n_t\big\|\le Gh^{\min\{2H-1/2,H-1/\gamma\}-\epsilon},\quad a.s.,
	\end{align*}
	and
	\begin{align*}
		\max_{k=1,\cdots,n}\big\|Y^n_{t_{k}}-\dot{\tilde{Y}}^n_{t_k}\big\|\le Gh^{4H-1-\epsilon},\quad a.s.
	\end{align*}
	In the following, we aim to prove  
	\begin{align}\label{key}
		\sup_{0\le t\le T}\big\|y^n_t-\tilde{Y}^n_t\big\|\le G h^{2H-\frac12-\epsilon},\quad a.s.
	\end{align}
	
	For $\alpha=(\alpha_1,\cdots,\alpha_d)\in(\mathbb{N}_+)^d$, denote by $\tilde{x}^{n,\alpha}$ an $\mathbb{R}$-valued stochastic process on $[0,T]$ such that 
	\begin{align}\label{x-alpha}
		\tilde{x}^{n,\alpha}_{t}=\tilde{x}^{n,\alpha}_{t_k}+(t-t_k)h^{-1}(\Delta B^{1}_{k+1})^{\alpha_1}\cdots(\Delta B^{d}_{k+1})^{\alpha_d},\quad h=\frac{T}{n},\quad t\in (t_k,t_{k+1}],\quad k=0,\cdots,n-1.
	\end{align}
	Note that if $|\alpha|=1$ with $\alpha_l=1$, then $\tilde{x}^{n,\alpha}$ is the $l$th component of the piecewise linear interpolation $x^n$ of $B$   defined in \eqref{xh}.
	We construct a new multidimensional stochastic process $\tilde{X}^n$ which satisfies that its first $d$ components are $\tilde{x}^{n,\alpha}$ for  $|\alpha|=1$ with $\alpha_l=1$, $l=1,\cdots,d$, and that the other components are $\tilde{x}^{n,\alpha}$ with $|\alpha|=3$. By means of this process, the truncated stochastic modified equation \eqref{TME} with $N=3$ is equivalent to an equation driven by $\tilde{X}^n$, i.e., 
	\begin{align}\label{step3-1}
		\left\{
		\begin{aligned}
			{\rm d}\tilde{Y}^n_{t}
			&=\sum_{|\alpha|=1,3}f_\alpha(\tilde{Y}^n_{t}){\rm d} \tilde{x}^{n,\alpha}_t=:V(\tilde{Y}^n_{t}){\rm d} \tilde{X}^{n}_t,\quad t\in(0,T],\\
			\tilde{Y}^n_{0}&=z.
		\end{aligned}
		\right.
	\end{align}
	Meanwhile, we rewrite the Wong--Zakai approximation \eqref{WZK} as
	\begin{align}\label{step3-2}
		\left\{
		\begin{aligned}
			{\rm d}y^n_{t}
			&=V(y^n_{t}){\rm d} \bar{X}^{n}_t,\quad t\in(0,T],\\
			y^n_{0}&=z,
		\end{aligned}
		\right.
	\end{align}
	where the dimension of $\bar{X}^n=(x^n,0,0...,0,0)$ is the same as that of $\tilde{X}^n$  and their first $d$  components are the same. Since $\tilde{X}^n$ and $\bar{X}^n$ both have bounded variations for almost all sample paths, equations \eqref{step3-1}-\eqref{step3-2} can be interpreted in the sense of RDEs driven by $S_{3}(\tilde{X}^n)$ and $S_{3}(\bar{X}^n)$, respectively. Moreover, according to Lemma \ref{lm-2}, we have for any $0<\beta <H$ that
	\begin{align*}
		\sup_{n\in \mathbb{N}_+}\|S_{3}(\bar{X}^{n})\|_{\beta;[0,T]} < \infty,\quad a.s.
	\end{align*}
	Together with Lemma \ref{lm-1}, a sufficient condition for \eqref{key} is
	\begin{align*}
		\max_{i=1,2,3} \sup_{0\leq s<t\leq T} \frac{\Big\|\big(S_{3}(\tilde{X}^n)^{-1}_s\otimes S_{3}(\tilde{X}^n)_t\big)^i
			-\big(S_{3}(\bar{X}^n)^{-1}_s\otimes S_{3}(\bar{X}^n)_t\big)^i\Big\|}{|t-s|^{i\beta}}\le Gh^{2H-1/2-\epsilon},\quad a.s.
	\end{align*}
	Based on the formula \eqref{sig}, it remains to require the following inequalities
	\begin{align*}
		&\sup_{0\leq s<t\leq T} \frac{\left|\int_{s}^{t}{\rm d}\tilde{x}^{n,\alpha}_{u_1}\right|}{|t-s|^{\beta}}\le Gh^{2H-1/2-\epsilon} ,\quad |\alpha|=3,\\
		&\sup_{0\leq s<t\leq T} \frac{\left|\int_{s}^{t}\int_{s}^{u_1}{\rm d}\tilde{x}_{u_2}^{n,\alpha^2}{\rm d}\tilde{x}^{n,\alpha^1}_{u_1}\right|}{|t-s|^{2\beta}}\le G h^{2H-1/2-\epsilon},\quad |\alpha^1|=1,~|\alpha^2|=3,\\
		&\sup_{0\leq s<t\leq T} \frac{\left|\int_{s}^{t}\int_{s}^{u_1}{\rm d}\tilde{x}_{u_2}^{n,\alpha^2}{\rm d}\tilde{x}^{n,\alpha^1}_{u_1}\right|}{|t-s|^{2\beta}}\le G h^{2H-1/2-\epsilon},\quad |\alpha^1|=3,~|\alpha^2|=1,\\
		&\sup_{0\leq s<t\leq T} \frac{\left|\int_{s}^{t}\int_{s}^{u_1}{\rm d}\tilde{x}_{u_2}^{n,\alpha^2}{\rm d}\tilde{x}^{n,\alpha^1}_{u_1}\right|}{|t-s|^{2\beta}}\le G h^{2H-1/2-\epsilon},\quad |\alpha^1|=|\alpha^2|=3,\\
		&\sup_{0\leq s<t\leq T} \frac{\left|\int_{s}^{t}\int_{s}^{u_1}\int_{s}^{u_2}{\rm d}\tilde{x}_{u_3}^{n,\alpha^3}{\rm d}\tilde{x}_{u_2}^{n,\alpha^2}{\rm d}\tilde{x}^{n,\alpha^1}_{u_1}\right|}{|t-s|^{3\beta}}\le G h^{2H-1/2-\epsilon},\quad |\alpha^1|=|\alpha^2|=3,~|\alpha^3|=1,\\
		&\sup_{0\leq s<t\leq T} \frac{\left|\int_{s}^{t}\int_{s}^{u_1}\int_{s}^{u_2}{\rm d}\tilde{x}_{u_3}^{n,\alpha^3}{\rm d}\tilde{x}_{u_2}^{n,\alpha^2}{\rm d}\tilde{x}^{n,\alpha^1}_{u_1}\right|}{|t-s|^{3\beta}}\le G h^{2H-1/2-\epsilon},\quad |\alpha^2|=|\alpha^3|=3,~|\alpha^1|=1,\\
		&\sup_{0\leq s<t\leq T} \frac{\left|\int_{s}^{t}\int_{s}^{u_1}\int_{s}^{u_2}{\rm d}\tilde{x}_{u_3}^{n,\alpha^3}{\rm d}\tilde{x}_{u_2}^{n,\alpha^2}{\rm d}\tilde{x}^{n,\alpha^1}_{u_1}\right|}{|t-s|^{3\beta}}\le G h^{2H-1/2-\epsilon},\quad |\alpha^1|=|\alpha^3|=3,~|\alpha^2|=1,\\
		&\sup_{0\leq s<t\leq T} \frac{\left|\int_{s}^{t}\int_{s}^{u_1}\int_{s}^{u_2}{\rm d}\tilde{x}_{u_3}^{n,\alpha^3}{\rm d}\tilde{x}_{u_2}^{n,\alpha^2}{\rm d}\tilde{x}^{n,\alpha^1}_{u_1}\right|}{|t-s|^{3\beta}}\le G h^{2H-1/2-\epsilon},\quad |\alpha^1|=3,~|\alpha^2|=|\alpha^3|=1,\\
		&\sup_{0\leq s<t\leq T} \frac{\left|\int_{s}^{t}\int_{s}^{u_1}\int_{s}^{u_2}{\rm d}\tilde{x}_{u_3}^{n,\alpha^3}{\rm d}\tilde{x}_{u_2}^{n,\alpha^2}{\rm d}\tilde{x}^{n,\alpha^1}_{u_1}\right|}{|t-s|^{3\beta}}\le G h^{2H-1/2-\epsilon},\quad |\alpha^2|=3,~|\alpha^1|=|\alpha^3|=1,\\
		&\sup_{0\leq s<t\leq T} \frac{\left|\int_{s}^{t}\int_{s}^{u_1}\int_{s}^{u_2}{\rm d}\tilde{x}_{u_3}^{n,\alpha^3}{\rm d}\tilde{x}_{u_2}^{n,\alpha^2}{\rm d}\tilde{x}^{n,\alpha^1}_{u_1}\right|}{|t-s|^{3\beta}}\le G h^{2H-1/2-\epsilon},\quad |\alpha^3|=3,~|\alpha^1|=|\alpha^2|=1,\\
		&\sup_{0\leq s<t\leq T} \frac{\left|\int_{s}^{t}\int_{s}^{u_1}\int_{s}^{u_2}{\rm d}\tilde{x}_{u_3}^{n,\alpha^3}{\rm d}\tilde{x}_{u_2}^{n,\alpha^2}{\rm d}\tilde{x}^{n,\alpha^1}_{u_1}\right|}{|t-s|^{3\beta}}\le G h^{2H-1/2-\epsilon},\quad |\alpha^1|=|\alpha^2|=|\alpha^3|=3,
	\end{align*}
	which are proved in Propositions \ref{prop-1}-\ref{prop-3} in the next section.
	Therefore, we conclude 
	\begin{align}\label{key1}
		\max_{k=1,\cdots,n}\big\|Y_{t_{k}}-Y^n_{t_{k}}\big\|\le Gh^{\min\{2H-1/2,H-1/\gamma\}-\epsilon},\quad a.s.,
	\end{align}
	where the random variable $G$ is independent of $h$.
\end{proof}

\begin{remark}
	If $\sigma\in Lip^{\gamma}_{{\rm loc}}$, the result in Theorem \ref{main-1} holds with a localization argument as long as the pathwise solution does not explode.
\end{remark}

\begin{remark}
	For numerical schemes constructed by a second-order Taylor expansion, such as the Crank--Nicolson scheme in \cite{CN2017} and the midpoint scheme in \cite{HHW18}, since the associated coefficients of the stochastic modified equation satisfy $f_\alpha(y)=0$, $|\alpha|=2$, we obtain immediately from the proof of Theorem \ref{main-1} that the convergence rate of these schemes are the same. In other words, the error estimate \eqref{key1} holds for these schemes.
\end{remark}

\begin{remark}
	Based on the fact that the regularity of drift terms is better than that of diffusion terms so that the convergence rates for multiple integrals $\tilde{X}^n$ showed in Propositions \ref{prop-1}-\ref{prop-3} do not decrease, Theorem \ref{main-1} can be extended to SDEs with drift terms. Moreover, the framework presented in the proof of Theorem \ref{main-1} is valid for SDEs driven by general signals, once convergence rates for multiple integrals $\tilde{X}^n$ and piecewise linear appromation $x^n$ are established. 
\end{remark}

\section{Technical estimates}\label{sec-4}
In this section, we first introduce several lemmas and then prove the estimates for multiple integrals of $\tilde{X}^n$ in Propositions \ref{prop-1}-\ref{prop-3}, which are crucial in the proof of the main theorem.

\begin{lemma}\label{lm1}\rm{(Besov--H\"older embedding; see e.g. \cite[Corollary A.2]{Wzk})}.
	Let $q>1$ and $\frac1q<\alpha<1$. For a continuous function $x:[0,T]\rightarrow \mathbb{R}^d$ and $0\le s<t\le T$, it holds that
	\begin{align*}
		\|x\|_{\alpha-\frac1q;[s,t]}:=\sup_{s\le u<v\le t}\frac{\|x_u-x_v\|}{|u-v|^{\alpha-\frac1q}}\le C(\alpha,q)\bigg(\int_{s}^{t}\int_{s}^{t}
		\frac{\|x_u-x_v\|^q}{|u-v|^{1+\alpha q}}
		{\rm d}u{\rm d}v\bigg)^{\frac1q}.
	\end{align*}
\end{lemma}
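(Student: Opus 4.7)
The plan is to establish this Besov--H\"older embedding as a special case of the classical Garsia--Rodemich--Rumsey (GRR) inequality. Denote the right-hand side by
\[
F := \left(\int_{s}^{t}\int_{s}^{t}\frac{\|x_u - x_v\|^q}{|u-v|^{1+\alpha q}}\,du\,dv\right)^{1/q}
\]
and assume $F < \infty$ (the statement being vacuous otherwise). The reduction I would carry out is to prove the pointwise bound $\|x_u - x_v\| \le C(\alpha, q)\,F\,|u-v|^{\alpha - 1/q}$ for every $u < v$ in $[s, t]$; taking the supremum over such $u, v$ then recovers the stated inequality.

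The core mechanism is a dyadic chaining argument. Fix $u < v$ and write $d := v - u$. I would construct a sequence of intermediate points $u = w_0, w_1, w_2, \ldots$ converging to $v$ with consecutive spacings comparable to $d \cdot 2^{-k}$, arranged so that each consecutive increment satisfies $\|x_{w_k} - x_{w_{k-1}}\| \lesssim F \cdot (d \cdot 2^{-k})^{\alpha - 1/q}$. The existence of a suitable $w_k$ at each scale is ensured by a Chebyshev-plus-Fubini argument applied to the restriction of the double integral to dyadic sub-squares: since the full integral equals $F^q$, on every dyadic sub-square of side $d \cdot 2^{-k}$ the inner slice $w \mapsto \int \|x_a - x_w\|^q / |a - w|^{1+\alpha q}\,da$ cannot systematically exceed its natural scale-adapted average, yielding admissible candidates $w_k$. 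Summing the telescoping estimate gives
\[
\|x_u - x_v\| \le \sum_{k \ge 1} \|x_{w_k} - x_{w_{k-1}}\| \lesssim F \cdot d^{\alpha - 1/q} \sum_{k \ge 1} 2^{-k(\alpha - 1/q)},
\]
and the geometric series converges precisely because of the hypothesis $\alpha > 1/q$, explaining both why the Besov--H\"older exponent must be $\alpha - 1/q$ and why the case $\alpha = 1/q$ is borderline.

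The main technical obstacle is the simultaneous selection of good intermediate points at every dyadic scale so that the chain can actually be continued. The standard device is an inductive ``bad-set has small measure'' argument: at scale $k$, the set of $w$ in the prescribed dyadic half for which the inner slice integral exceeds a suitable threshold has measure strictly less than half the length of that half-interval, leaving room to pick $w_k$ and also propagate the chain on both sides of any previously selected point. Equivalently, one can invoke the abstract GRR estimate with gauge functions $\Psi(u) = u^q$ and $p(r) = r^{(1+\alpha q)/q}$, which directly yields $|f(s) - f(t)| \le C \,U^{1/q} \int_0^{t-s} r^{-2/q}\,dp(r)$, an integral that evaluates exactly to the required $(t-s)^{\alpha - 1/q}$. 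Either route produces an explicit constant $C(\alpha, q)$ depending only on $\alpha$ and $q$, completing the proof.
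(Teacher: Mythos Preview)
Your approach via the Garsia--Rodemich--Rumsey inequality is correct and is precisely the standard route to this Besov--H\"older embedding; the choice $\Psi(u)=u^q$, $p(r)=r^{\alpha+1/q}$ in the abstract GRR lemma does indeed yield the stated bound with exponent $\alpha-1/q$, and your explanation of why the hypothesis $\alpha>1/q$ is needed (convergence of the geometric/dyadic sum) is accurate.

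However, there is nothing to compare against: the paper does not supply its own proof of this lemma. It is stated as a known result and attributed to \cite[Corollary A.2]{Wzk}, so the ``paper's proof'' consists solely of that citation. Your argument is therefore not an alternative to the paper's reasoning but rather a reconstruction of the classical proof that the cited reference presumably contains. If anything, you have provided more than the paper does here.
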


\begin{lemma}\label{lm2}
	Let $0<\alpha\le 1$.  If for all $q\ge 1$, a sequence of  $\mathbb{R}^d$-valued stochastic processes $\{g^n\}_{n=1}^\infty$ on $[0,T]$ satisfies
	\begin{align*}
		\sup_{0\le s<t\le T}\frac{\|g^n_t-g^n_s\|_{L^q(\Omega;\mathbb{R}^d)}}{|t-s|^\alpha}\le Cn^{-\gamma}
	\end{align*}
	with $C=C(q)$ independent of $n$, then for any $0<\delta<\alpha$ and $q\ge 1$, there exists a constant $C=C(\delta,\alpha,q,T)$ independent of $n$ such that 
	\begin{align*}
		\left\|\sup_{0\le s<t\le T}\frac{\|g^n_t-g^n_s\|}{|t-s|^{\alpha-\delta}}\right\|_{L^q(\Omega)}\le Cn^{-\gamma}.
	\end{align*}
	Moreover, for any $0<\epsilon<\gamma$, there exists a random variable $G=G(\epsilon,\delta,\alpha,T)$ independent of $n$ such that 
	\begin{align*}
		\sup_{0\le s<t\le T}\frac{\|g^n_t-g^n_s\|}{|t-s|^{\alpha-\delta}}\le Gn^{-(\gamma-\epsilon)}.
	\end{align*}	
\end{lemma}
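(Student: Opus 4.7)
The plan is first to establish the $L^q$ moment bound using the Besov--H\"older embedding of Lemma \ref{lm1}, and then to upgrade it to the almost sure estimate via a Borel--Cantelli-type moment summation. The whole argument is a variant of Kolmogorov--Centsov continuity made uniform in $n$, and no genuine obstacle is expected; the only care needed is in the parameter tuning.

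For the moment bound, I will pick $q'>1/\delta$ and set $\alpha':=\alpha-\delta+1/q'$, which lies in $(1/q',1)$ since $0<\delta<\alpha\le 1$ implies $1/q'<\delta\le 1-\alpha+\delta$. Applying Lemma \ref{lm1} to $g^n$ at level $(\alpha',q')$, raising to the $q'$-th power, taking expectation, and exchanging expectation with the double spatial integral via Tonelli, the hypothesis $\mathbb{E}\|g^n_u-g^n_v\|^{q'}\le C^{q'}n^{-\gamma q'}|u-v|^{\alpha q'}$ reduces matters to the integral $\int_0^T\int_0^T|u-v|^{\delta q'-2}\,du\,dv$, which is finite precisely because $\delta q'>1$. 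Taking $q'$-th roots gives $\bigl\|\|g^n\|_{\alpha-\delta;[0,T]}\bigr\|_{L^{q'}(\Omega)}\le C(\delta,\alpha,q',T)n^{-\gamma}$. Any $q\in[1,q']$ is then covered by Jensen's inequality, since $L^q$ norms are monotone on a probability space.

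For the almost sure statement, fix $0<\epsilon<\gamma$ and choose $q'$ large enough so that both the moment bound above applies and $\epsilon q'>1$. Setting $A_n:=n^{\gamma-\epsilon}\|g^n\|_{\alpha-\delta;[0,T]}$, monotone convergence together with the moment bound yields
\begin{align*}
\mathbb{E}\Bigl[\sup_{n\ge1}A_n^{q'}\Bigr]\le\sum_{n=1}^\infty\mathbb{E}[A_n^{q'}]\le C\sum_{n=1}^\infty n^{(\gamma-\epsilon)q'-\gamma q'}=C\sum_{n=1}^\infty n^{-\epsilon q'}<\infty,
\end{align*}
so that $G:=\sup_{n\ge1}A_n$ is almost surely finite and depends only on $\epsilon,\delta,\alpha,T$. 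By the very definition of $G$, the inequality $\|g^n\|_{\alpha-\delta;[0,T]}\le Gn^{-(\gamma-\epsilon)}$ holds for every $n\ge1$, which is the desired claim. The only technicality in the whole proof is the joint parameter tuning $q'>\max(1/\delta,1/\epsilon)$.
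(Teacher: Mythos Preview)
Your proof is correct and follows essentially the same route as the paper: Besov--H\"older embedding (Lemma~\ref{lm1}) plus Tonelli for the $L^{q'}$ bound on the H\"older seminorm, then the summability argument $\sum_n n^{-\epsilon q'}<\infty$ to get the almost sure bound. The paper simply fixes $q'=2/\delta$ (making the integrand exponent $\delta q'-2=0$), whereas you allow any $q'>1/\delta$ and then invoke Jensen to cover all $q\ge 1$; this is a cosmetic difference.
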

\begin{proof}
	Let $q=\frac{2}{\delta}$ so that $\alpha-\delta=(\alpha-1/q)-1/q$. Based on Lemma \ref{lm1}, we have 
	\begin{align*}
		\mathbb{E}\Bigg[\bigg(\sup_{0\le s<t\le T}\frac{\|g^n_t-g^n_s\|}{|t-s|^{\alpha-\delta}}\bigg)^q\Bigg]= &
		\Big\|\|g^n\|_{(\alpha-\frac1q)-\frac1q;[0,T]}\Big\|_{L^q(\Omega)}^q\\
		\le &C\int_{0}^{T}\int_{0}^{T} \frac{\mathbb{E}\big[\|g^n_u-g^n_v\|^q\big]}{|u-v|^{1+(\alpha-1/q)q}}{\rm d}u{\rm d}v\\
		\le &C\int_{0}^{T}\int_{0}^{T} \frac{|u-v|^{\alpha q}n^{-\gamma q}}{|u-v|^{1+(\alpha-1/q)q}}dudv\\
		\le &Cn^{-\gamma q}.
	\end{align*}
	
	Taking $q>1$ such that $\epsilon q >1$,  we obtain
	\begin{align*}
		\mathbb{E}\bigg[\sup_{n\in \mathbb{N}}\|n^{(\gamma-\epsilon)q}g^n\|^q_{\alpha-\delta;[0,T]}\bigg]
		\le \sum_{n=1}^{\infty}\mathbb{E}\bigg[\|n^{(\gamma-\epsilon)q}g^n\|^q_{\alpha-\delta;[0,T]}\bigg]
		\le C\sum_{n=1}^{\infty} n^{-\epsilon q}\le C,
	\end{align*}
	which impies the conclusion.
\end{proof}

\begin{lemma}\label{lm3}
	Let $0<H<1$. Given positive integers $m,i,j,k$ satisfying $m\ge 1$, $1\le i\le d$ and $0\le j<k\le n=\frac{T}{h}$. We have that
	\begin{align*}
		\bigg\|      \sum_{l=j+1}^{k} (\Delta B^i_l)^m  \bigg\|_{L^2(\Omega)}&\le C h^{mH-1/2}|t_k-t_j|^{1/2},
		\quad if~m~is~odd,\\
		\bigg\|      \sum_{l=j+1}^{k}  (\Delta B^i_l)^m \bigg\|_{L^2(\Omega)}&\le Ch^{mH-1}|t_k-t_j|,
		~~~\quad\quad if~m~is~even,
	\end{align*}
	where $C=C(m)$ independent of $i,k,j,h$.
\end{lemma}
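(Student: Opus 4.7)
The plan is to treat the odd and even cases by quite different arguments: the even case follows from a direct Minkowski bound, while the odd case requires exploiting the zero mean of $(\Delta B^i_l)^m$ via a Hermite polynomial expansion together with the anti-persistence property of fBm for $H<1/2$.

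For the even case I would apply Minkowski's inequality,
\[
\Big\|\sum_{l=j+1}^{k}(\Delta B^i_l)^m\Big\|_{L^2(\Omega)} \le \sum_{l=j+1}^{k}\|(\Delta B^i_l)^m\|_{L^2(\Omega)},
\]
and then use the Gaussian moment formula $\|(\Delta B^i_l)^m\|_{L^2(\Omega)} = \sqrt{(2m-1)!!}\,h^{mH}$ to obtain a bound $C(m)(k-j)h^{mH} = C(m)\,h^{mH-1}(t_k-t_j)$, as required.

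For the odd case Minkowski alone would only recover the even-case bound, which is too weak; one has to exploit $\mathbb{E}[(\Delta B^i_l)^m]=0$. Normalize $X_l := h^{-H}\Delta B^i_l\sim N(0,1)$, so that
\[
\rho(l_1-l_2) := \mathbb{E}[X_{l_1}X_{l_2}] = \tfrac12\big(|l_1-l_2+1|^{2H}+|l_1-l_2-1|^{2H}-2|l_1-l_2|^{2H}\big).
\]
Expanding $x^m = \sum_{q} a_{m,q} H_q(x)$ in probabilists' Hermite polynomials and noting that for $m$ odd only odd $q$ appear (so in particular no constant term), the chaos orthogonality $\mathbb{E}[H_q(X_{l_1})H_{q'}(X_{l_2})] = \delta_{qq'}\,q!\,\rho(l_1-l_2)^q$ gives
\[
\mathbb{E}\Big[\Big(\sum_{l=j+1}^{k}X_l^m\Big)^2\Big] = \sum_{\substack{q\text{ odd}\\ 1\le q\le m}} a_{m,q}^2\, q!\sum_{l_1,l_2=j+1}^{k}\rho(l_1-l_2)^q.
\]
The asymptotic $\rho(d)\sim H(2H-1)|d|^{2H-2}$ together with $2H-2<-1$ yields $\sum_{d\in\mathbb{Z}}|\rho(d)|<\infty$, and since $|\rho(d)|\le 1$ we get $\sum_{d}|\rho(d)|^q\le \sum_d|\rho(d)|\le C$ for every $q\ge 1$. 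Each inner sum is then bounded by $C(k-j)$, and unwinding the $h^{2mH}$ scaling produces $\|\sum_l(\Delta B^i_l)^m\|_{L^2}^2\le C(m)\,h^{2mH-1}(t_k-t_j)$, which is the desired estimate after taking square roots.

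The main obstacle is precisely the summability of $|\rho|^q$; it is this that converts the naive $(k-j)$ diagonal bound into the sharp $(k-j)^{1/2}$ bound once the odd mean vanishes. It rests on the anti-persistence of fBm in the regime $H<1/2$, the standing assumption of the paper; for $H=1/2$ the $X_l$ are independent and the estimate is immediate, so the odd-case conclusion should be read in the range relevant to Theorem \ref{main-1}.
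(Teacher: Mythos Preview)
Your argument is correct and, for the odd case, rests on the same mechanism as the paper's: a Hermite expansion of $x^m$ combined with summability of the stationary correlations $\rho(d)^q$. The paper packages this step by citing the variance computation behind the Breuer--Major theorem rather than writing it out, whereas you compute it directly; the content is the same. For the even case your Minkowski bound is more elementary than the paper's route, which again passes through the Hermite decomposition and then notes that the degree-zero term $(m-1)!!\sum_l 1$ dominates. Your caveat about the range of $H$ is well taken: the odd-$m$ estimate genuinely needs $H\le 1/2$ (already for $m=1$ the left side equals $|t_k-t_j|^{H}$, which cannot be bounded by $h^{H-1/2}|t_k-t_j|^{1/2}$ uniformly in $h$ when $H>1/2$), so the lemma's stated hypothesis $0<H<1$ is too generous, and the paper's own appeal to Breuer--Major implicitly uses the same restriction.
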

\begin{proof}
	Based on the asymptotical behavior of the covariance of increments of fBm given in \cite[Section 7.4]{NP12}, we apply \cite[Theorem 1]{BM83} to the random variables  $A_l=\frac{\Delta B^i_l}{{h^H}}$, $l=j+1,\cdots,k$, which have zero mean and unit variance, and obtain that 
	\begin{align*}
		\lim_{n\rightarrow \infty}\mathbb{E}\bigg[\Big(|t_k-t_j|^{-\frac12} h^{\frac12}\sum_{l=j+1}^{k}  \mathbf{H}_m( A_l)\Big)^2\bigg]=C,
	\end{align*}
	where $\mathbf{H}_m$ are $m$th monic Hermite polynomial. Noticing that
	if $m$ is odd, then the degrees of the terms in $\mathbf{H}_m(x)$ are all odd, then we have
	\begin{align*}
		\bigg\|      \sum_{l=j+1}^{k} (\Delta B^i_l)^m  \bigg\|_{L^2(\Omega)}\le C h^{mH-1/2}|t_k-t_j|^{1/2}.
	\end{align*}
	If $m$ is even, then the degrees of the terms in $\mathbf{H}_m(x)$ are all even. In this case, we combine 
	\begin{align*}
		\lim_{n\rightarrow \infty}\mathbb{E}\bigg[\Big(|t_k-t_j|^{-1} h\sum_{l=j+1}^{k} 1 \Big)^2\bigg]=C
	\end{align*}
	to get 
	\begin{align*}
		\bigg\|      \sum_{l=j+1}^{k}  (\Delta B^i_l)^m \bigg\|_{L^2(\Omega)}\le Ch^{mH-1}|t_k-t_j|,
	\end{align*}
	which finishes the proof.
\end{proof}

\begin{lemma}\label{lm4}
	For centered Gaussian random vectors $A$ and $\tilde{A}$, it holds that 
	\begin{align*}
		\mathbb{E}[A^2\tilde{A}^2]&=\mathbb{E}[A^2]\mathbb{E}[\tilde{A}^2]
		+2\big(\mathbb{E}[A\tilde{A}]\big)^2,\\
		\mathbb{E}[A^3\tilde{A}^3]&=6\big(\mathbb{E}[A\tilde{A}]\big)^3+9\mathbb{E}[A\tilde{A}] \mathbb{E}[A^2]\mathbb{E}[\tilde{A}^2].
	\end{align*}
\end{lemma}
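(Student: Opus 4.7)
The plan is to recognize both identities as special cases of Wick's theorem (the Isserlis formula) for jointly centered Gaussian vectors, which expresses $\mathbb{E}[X_1 X_2 \cdots X_{2n}]$ as the sum, over all perfect matchings $p$ of $\{1,\dots,2n\}$, of the products $\prod_{\{i,j\}\in p}\mathbb{E}[X_i X_j]$. Once this formula is invoked, both assertions reduce to combinatorial bookkeeping of pairings on labelled symbols, since the pair $(A,\tilde A)$ is jointly Gaussian by hypothesis.

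For the first identity, I would set $(X_1,X_2,X_3,X_4)=(A,A,\tilde A,\tilde A)$ and enumerate the $3!!=3$ perfect matchings. The matching $\{\{1,2\},\{3,4\}\}$ contributes $\mathbb{E}[A^2]\mathbb{E}[\tilde A^2]$, while the two ``crossed'' matchings $\{\{1,3\},\{2,4\}\}$ and $\{\{1,4\},\{2,3\}\}$ each contribute $(\mathbb{E}[A\tilde A])^2$, yielding the stated formula after summation. For the second identity, I would analogously take $(X_1,\dots,X_6)=(A,A,A,\tilde A,\tilde A,\tilde A)$ and stratify the $5!!=15$ perfect matchings by the number of ``same-type'' pairs (pairs linking two $A$'s or two $\tilde A$'s). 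Parity forces the only possibilities to be either (i) zero same-type pairs — all three pairs are mixed $\{A,\tilde A\}$, giving $3!=6$ matchings, each contributing $(\mathbb{E}[A\tilde A])^3$ — or (ii) exactly one $\{A,A\}$ pair together with one $\{\tilde A,\tilde A\}$ pair and one mixed pair, giving $\binom{3}{2}\binom{3}{2}=9$ matchings, each contributing $\mathbb{E}[A^2]\mathbb{E}[\tilde A^2]\mathbb{E}[A\tilde A]$. Assembling the two contributions reproduces the claimed formula.

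There is no genuine obstacle beyond enumerating pairings cleanly; the one point deserving care is simply checking that no matching has been double-counted or overlooked, which the tally $6+9=15=5!!$ confirms. As an entirely self-contained alternative, one could instead expand the bivariate Gaussian moment generating function $\mathbb{E}[\exp(sA+t\tilde A)]=\exp\bigl(\tfrac{1}{2}s^2\mathbb{E}[A^2]+st\mathbb{E}[A\tilde A]+\tfrac{1}{2}t^2\mathbb{E}[\tilde A^2]\bigr)$ and read off $\mathbb{E}[A^2\tilde A^2]$ and $\mathbb{E}[A^3\tilde A^3]$ as the coefficients of $s^2 t^2$ and $s^3 t^3$ multiplied by $(2!)^2$ and $(3!)^2$, respectively, bypassing the matching count entirely.
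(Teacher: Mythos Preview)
Your argument is correct: both identities are immediate consequences of the Isserlis--Wick formula, and your matching counts ($1+2=3$ and $6+9=15$) are accurate, so there is no gap.

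By way of comparison, the paper does not actually prove this lemma at all; it simply cites \cite[Lemma~3.7]{RV00SSR} for the first identity and \cite[Proposition~2.3]{G03AOP} for the second. Your route is therefore strictly more self-contained: you derive both formulas from a single well-known principle (Wick's theorem), with the moment-generating-function expansion offered as an independent cross-check. The only cost is that you implicitly assume $(A,\tilde A)$ is \emph{jointly} Gaussian---which is of course how the lemma is used later in the paper, where $A$ and $\tilde A$ are increments of the same fractional Brownian motion---so it may be worth making that hypothesis explicit when you write it up. (The paper's phrasing ``centered Gaussian random vectors'' is slightly loose in the same way.)
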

\begin{proof}
	We refer to \cite[Lemma 3.7]{RV00SSR} for the first formula and to \cite[Proposition 2.3]{G03AOP} for the second one.
\end{proof}

\begin{lemma}\label{lm5}\rm{(Discrete sewing lemma; see e.g. \cite[Lemma 2.5]{AAP2019})}.
	Assume that $f_{t_j,t_k}$ is a function defined on $\{(t_j,t_k): 0\le t_j=jh<t_k=kh\le T\}$. If  $f_{t_j,t_{j+1}}=0$, $j=0,...,n-1$, and there exists a constant $\mu>1$ such that
	\begin{align*}
		\sup_{(t_j,t_k,t_l)}\frac{\big\|f_{t_j,t_l}-f_{t_j,t_k}-f_{t_k,t_l}\big\|}{|t_j-t_l|^\mu}\le C_0,
	\end{align*} 
	then 
	\begin{align*}
		\sup_{(t_j,t_k)}\frac{\|f_{t_j,t_k}\|}{|t_j-t_k|^\mu}\le C(\mu)C_0.
	\end{align*}
\end{lemma}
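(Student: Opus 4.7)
The plan is to establish this as a discrete counterpart of Gubinelli's sewing lemma, by an induction on the gap $n := k-j$ combined with a bisection argument. First, I would introduce the trilinear coboundary
\[
\delta f_{t_j,t_k,t_l} := f_{t_j,t_l} - f_{t_j,t_k} - f_{t_k,t_l},\qquad j<k<l,
\]
so the hypothesis reads $\|\delta f_{t_j,t_k,t_l}\| \le C_0(t_l-t_j)^\mu$, while the vanishing condition $f_{t_j,t_{j+1}}=0$ furnishes the base case $n=1$ of the induction (with any constant).

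For the inductive step, given a pair $(t_j,t_k)$ with $n=k-j\ge 2$, I would bisect at the midpoint index $m=j+\lfloor n/2\rfloor$ and use the defining identity
\[
f_{t_j,t_k} = f_{t_j,t_m} + f_{t_m,t_k} + \delta f_{t_j,t_m,t_k}.
\]
Feeding the inductive hypothesis $\|f_{t_i,t_{i'}}\|\le K(t_{i'}-t_i)^\mu$ (for a constant $K$ to be fixed) into the two shorter increments, while bounding $\delta f$ by the assumption, yields
\[
\|f_{t_j,t_k}\| \le \bigl[K\, r(n) + C_0\bigr](t_k-t_j)^\mu,\qquad r(n):=\frac{\lfloor n/2\rfloor^\mu + \lceil n/2\rceil^\mu}{n^\mu}.
\]
Since $\mu>1$, a short convexity computation (writing $r(2m+1)=2^{-\mu}[(1-x)^\mu+(1+x)^\mu]$ with $x=1/(2m+1)$ shows $r$ is monotone in $x$) gives $r(n)\le r^\star := (1+2^\mu)/3^\mu < 1$ uniformly in $n\ge 2$, the worst case occurring at $n=3$. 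Choosing $K = C_0/(1-r^\star)$ then closes the induction and delivers the desired bound with $C(\mu)=3^\mu/(3^\mu-1-2^\mu)$.

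The one point that truly requires care is the strict inequality $r^\star<1$, which is precisely where the assumption $\mu>1$ is essential: as $\mu\downarrow 1$ one has $r^\star\uparrow 1$, so $C(\mu)\to\infty$, matching the familiar Young-type threshold for the existence of such iterated-integral constructions. Beyond that, no deeper obstacle arises, and the bisection scheme may alternatively be viewed as the discrete analogue of the telescoping partition-refinement proof of Gubinelli's continuous sewing lemma, where each refinement contributes a $\delta f$ term and the total is controlled by a geometric series of ratio $2^{1-\mu}$.
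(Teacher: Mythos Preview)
The paper does not supply its own proof of this lemma; it merely quotes the result with a reference to \cite[Lemma 2.5]{AAP2019}. Your bisection-plus-strong-induction argument is correct and is one of the standard ways to establish discrete sewing bounds, and your identification of the worst contraction ratio $r^\star=(1+2^\mu)/3^\mu$ at $n=3$ is accurate.

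One minor expository point: the convexity computation you give covers only odd $n$. For even $n=2m$ one has $r(n)=2^{1-\mu}$, and this is indeed dominated by $r(3)$ because convexity of $t\mapsto t^\mu$ for $\mu>1$ gives $3^\mu\le(2^\mu+4^\mu)/2$, i.e.\ $2^{1-\mu}\le(1+2^\mu)/3^\mu$. With that remark added, the choice $K=C_0/(1-r^\star)$ closes the strong induction and yields $C(\mu)=3^\mu/(3^\mu-1-2^\mu)$, as you state. The alternative approach you mention in passing---iteratively removing one grid point at a time and summing a geometric series with ratio essentially $2^{1-\mu}$---is the route taken in most references, including the cited one; your bisection variant gives a slightly larger but still explicit constant and is equally valid.
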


\begin{proposition}\label{prop-1}
	Under the assumptions  in Theorem \ref{main-1} and the definition of $\tilde{x}^{n,\alpha}$ in \eqref{x-alpha}, then for any $0<\delta<\frac12$ and $0<\epsilon<3H-\frac12$, there exists a random variable $G=G(\delta,\epsilon)$ independent of $h=\frac{T}{n}$ such that
	\begin{align*}
		\sup_{0\leq s<t\leq T} \frac{\left|\int_{s}^{t}{\rm d}\tilde{x}^{n,\alpha}_{u_1}\right|}{|t-s|^{\frac12-\delta}}\le Gh^{3H-1/2-\epsilon},\quad \alpha=(\alpha_1,\cdots,\alpha_d),\quad |\alpha|=3.
	\end{align*}
\end{proposition}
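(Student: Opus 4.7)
The strategy is to first establish, uniformly in $n$, the $L^q$ estimate
\begin{align*}
  \big\|\tilde x^{n,\alpha}_t - \tilde x^{n,\alpha}_s\big\|_{L^q(\Omega)} \le C(q)\, h^{3H-1/2}\,|t-s|^{1/2},\qquad 0\le s<t\le T,\ q\ge 1,
\end{align*}
and then apply Lemma \ref{lm2} with exponent $\frac12$ and rate $\gamma=3H-\frac12$ to upgrade it to the stated almost-sure H\"older bound, absorbing the losses $\delta<\frac12$ in the H\"older exponent and $\epsilon<3H-\frac12$ in the rate. Note that $\int_s^t{\rm d}\tilde x^{n,\alpha}_{u_1}=\tilde x^{n,\alpha}_t-\tilde x^{n,\alpha}_s$ because $\tilde x^{n,\alpha}$ has bounded variation.

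The $L^q$ estimate is proved in two sub-steps. First, at grid points $s=t_j<t=t_k$, piecewise linearity in \eqref{x-alpha} gives
\begin{align*}
  \tilde x^{n,\alpha}_{t_k}-\tilde x^{n,\alpha}_{t_j} = \sum_{l=j+1}^k\prod_{i=1}^d(\Delta B^i_l)^{\alpha_i}.
\end{align*}
Since $|\alpha|=3$, the shape of $\alpha$ is one of $(3,0,\dots,0)$, $(2,1,0,\dots,0)$ or $(1,1,1,0,\dots,0)$ up to relabelling of components, and the independence of $B^1,\dots,B^d$ yields
\begin{align*}
  \mathbb{E}\Big[\Big(\sum_{l=j+1}^k\prod_i(\Delta B^i_l)^{\alpha_i}\Big)^2\Big] = \sum_{l,m=j+1}^k\prod_{i=1}^d \mathbb{E}\big[(\Delta B^i_l)^{\alpha_i}(\Delta B^i_m)^{\alpha_i}\big].
\end{align*}
Each factor is evaluated by Isserlis'/Wick's formula (Lemma \ref{lm4} being the relevant case $\alpha_i\in\{2,3\}$), producing a finite linear combination of terms
\begin{align*}
  h^{2H\sum_i(\alpha_i-k_i)}\prod_{i=1}^d\big(\rho^{(i)}_{l,m}\big)^{k_i},\qquad 0\le k_i\le\alpha_i,\ k_i\equiv\alpha_i\pmod{2},
\end{align*}
where $\rho^{(i)}_{l,m}:=\mathbb{E}[\Delta B^i_l\Delta B^i_m]$. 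Parity together with $|\alpha|=3$ forces $\sum_i k_i$ to be odd, and the all-diagonal case $\sum_i k_i=0$ cannot occur. When $\sum_i k_i=1$, the single $\rho$-factor is summed exactly via $\sum_{l,m=j+1}^k\rho^{(i)}_{l,m}=\mathbb{E}[(B^i_{t_k}-B^i_{t_j})^2]=|t_k-t_j|^{2H}$, and the prefactor $h^{4H}$ gives $h^{4H}|t_k-t_j|^{2H}\le h^{6H-1}|t_k-t_j|$ since $(|t_k-t_j|/h)^{2H-1}\le 1$. When $\sum_i k_i=3$, the classical fBm covariance bound $|\rho^{(i)}_{l,m}|\le Ch^{2H}(1+|l-m|)^{2H-2}$ combined with the summability of $(1+|r|)^{6H-6}$ (since $6H-6<-3$) yields
\begin{align*}
  \sum_{l,m=j+1}^k\prod_i|\rho^{(i)}_{l,m}|^{k_i}\le Ch^{6H}(k-j)=Ch^{6H-1}|t_k-t_j|.
\end{align*}
Every contribution is therefore bounded by $Ch^{6H-1}|t_k-t_j|$, giving the $L^2$ version of the target estimate at grid points; the $L^q$ version follows from Gaussian hypercontractivity inside the Wiener chaos of order at most $|\alpha|=3$. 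Second, for general $0\le s<t\le T$, if $s,t$ lie in the same interval $[t_j,t_{j+1}]$ then $\tilde x^{n,\alpha}_t-\tilde x^{n,\alpha}_s=h^{-1}(t-s)\prod_i(\Delta B^i_{j+1})^{\alpha_i}$, whose $L^q$ norm is $h^{3H-1}|t-s|\le h^{3H-1/2}|t-s|^{1/2}$ because $|t-s|\le h$; otherwise, with $s\in[t_j,t_{j+1})$ and $t\in[t_k,t_{k+1}]$ for some $j+1\le k$, one decomposes the increment into the grid piece $\tilde x^{n,\alpha}_{t_k}-\tilde x^{n,\alpha}_{t_{j+1}}$ (already controlled) plus two affine boundary terms $h^{-1}(t_{j+1}-s)\prod_i(\Delta B^i_{j+1})^{\alpha_i}$ and $h^{-1}(t-t_k)\prod_i(\Delta B^i_{k+1})^{\alpha_i}$, each of $L^q$ norm $O(h^{3H})\le h^{3H-1/2}|t-s|^{1/2}$ since $|t-s|\ge h$. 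Lemma \ref{lm2} then converts the uniform $L^q$ bound into the claimed almost-sure H\"older estimate.

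\textbf{Main obstacle.} The delicate point is the second-moment expansion at grid points: one must check case by case that, for each admissible $\alpha$ with $|\alpha|=3$ and each pairing produced by Wick's formula, the resulting double sum over the grid is bounded by $Ch^{6H}(k-j)$. This combines the odd-parity structure of $|\alpha|$, which forces at least one genuine covariance factor $\rho^{(i)}_{l,m}$ (and generically three) to appear, with the Hurst restriction $H<\frac12$ needed for summability of $(1+|r|)^{(2H-2)\sum_i k_i}$. It is the discrete analogue of the cancellation driving the Breuer--Major theorem behind Lemma \ref{lm3}, extended to products across the independent components of $B$.
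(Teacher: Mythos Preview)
Your proof is correct and follows the same overall skeleton as the paper's: reduce via Lemma~\ref{lm2} and Gaussian hypercontractivity to the $L^2$ increment bound $\|\tilde x^{n,\alpha}_t-\tilde x^{n,\alpha}_s\|_{L^2}\le Ch^{3H-1/2}|t-s|^{1/2}$, then split into the within-one-cell case and the grid-point case.

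The one substantive difference lies in how the grid-point $L^2$ bound is established. The paper treats the three shapes of $\alpha$ separately: for $\alpha=(3,0,\dots)$ it invokes Lemma~\ref{lm3} (the Breuer--Major type estimate) directly; for the mixed shapes $(1,1,1,0,\dots)$ and $(2,1,0,\dots)$ it uses independence of the components together with the moment identities of Lemma~\ref{lm4} to reduce back to $\sum_{l,m}\rho_{l,m}^3$, which is in turn controlled by the pure cubic case and Lemma~\ref{lm3}. You instead carry out a uniform Wick expansion, classify terms by $\sum_i k_i\in\{1,3\}$, and bound the $\sum_i k_i=3$ contributions directly via the pointwise covariance decay $|\rho_{l,m}|\le Ch^{2H}(1+|l-m|)^{2H-2}$ and summability of $(1+|r|)^{6H-6}$. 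This is slightly more elementary, since it avoids citing Lemma~\ref{lm3} (and hence the Breuer--Major theorem) altogether, at the cost of not exploiting the exact telescoping $\sum_{l,m}\rho_{l,m}=|t_k-t_j|^{2H}$ as cleanly as the paper does. Both routes yield the same bound $Ch^{6H-1}|t_k-t_j|$, and your handling of the $\sum_i k_i=1$ terms via $h^{4H}|t_k-t_j|^{2H}\le h^{6H-1}|t_k-t_j|$ is exactly the step the paper uses in the $(2,1)$ case.
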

\begin{proof}
	According to Lemma \ref{lm2} and hypercontractivity property, it suffices to prove 
	\begin{align*}
		\big\|\tilde{x}^{n,\alpha}_t-\tilde{x}^{n,\alpha}_s\big\|_{L^2(\Omega)}\le Ch^{3H-\frac12}|t-s|^\frac12,\quad \alpha=(\alpha_1,\cdots,\alpha_d),\quad |\alpha|=3.
	\end{align*}
	
	For $t_j\le s<t\le t_{j+1}$,  we have from $|t-s|\le h$ that
	\begin{align*}
		\big\|\tilde{x}^{n,\alpha}_t-\tilde{x}^{n,\alpha}_s\big\|_{L^2(\Omega)}
		=\frac{t-s}{h}\bigg\|(\Delta B^{1}_{k+1})^{\alpha_1}\cdots(\Delta B^{d}_{k+1})^{\alpha_d}\bigg\|_{L^2(\Omega)}
		\le C\bigg(\frac{t-s}{h}\bigg)^\frac12 h^{3H}
		\le Ch^{3H-\frac12}|t-s|^\frac12.
	\end{align*}
	
	For $t_{j-1}<s\le t_j\le t_{k}\le t<t_{k+1}$, based on the previous estimate and the fact
	\begin{align*}
		\big|\tilde{x}^{n,\alpha}_t-\tilde{x}^{n,\alpha}_s\big|\le
		\big|\tilde{x}^{n,\alpha}_t-\tilde{x}^{n,\alpha}_{t_{k}}\big|+
		\big|\tilde{x}^{n,\alpha}_{t_k}-\tilde{x}^{n,\alpha}_{t_{j}}\big|+
		\big|\tilde{x}^{n,\alpha}_{t_j}-\tilde{x}^{n,\alpha}_s\big|, 
	\end{align*}
	it suffices to consider the case $s= t_j< t_k=t$. If there exists an integer $i\in\{1,\cdots,d\}$ such that $\alpha_i=3$, then Lemma \ref{lm3} leads to 
	\begin{align*}
		\big\|\tilde{x}^{n,\alpha}_t-\tilde{x}^{n,\alpha}_s\big\|_{L^2(\Omega)}
		= \bigg\|\sum_{l=j+1}^{k} \big(\Delta B^{i_1}_{l}\big) ^3\bigg\|_{L^2(\Omega)}
		\le Ch^{3H-\frac12}|t-s|^\frac12.
	\end{align*}
	If there exist three distinct integers $i_1,i_2,i_3\in\{1,\cdots,d\}$ such that $\alpha_{i_1}=\alpha_{i_2}=\alpha_{i_3}=1$, then Lemmas \ref{lm3}-\ref{lm4} produce
	\begin{align*}
		0\le &\big\|\tilde{x}^{n,\alpha}_t-\tilde{x}^{n,\alpha}_s\big\|^2_{L^2(\Omega)}\\
		=&\mathbb{E}\Bigg[ \Bigg(\sum_{l=j+1}^{k} \big(\Delta B^{i_1}_{l}\big)\big(\Delta B^{i_2}_{l}\big)\big(\Delta B^{i_3}_{l}\big) \Bigg)\Bigg( \sum_{r=j+1}^{k} \big(\Delta B^{i_1}_r\big)\big(\Delta B^{i_2}_r\big) \big(\Delta B^{i_3}_r\big)  \Bigg)\Bigg]\\
		=&\sum_{l=j+1}^{k}\sum_{r=j+1}^{k}\mathbb{E}\Big[  \big(\Delta B^{i_1}_{l}\big) \big(\Delta B^{i_1}_r\big)  \Big]\mathbb{E}\Big[  \big(\Delta B^{i_2}_{l}\big)  \big(\Delta B^{i_2}_r\big) \Big]\mathbb{E}\Big[  \big(\Delta B^{i_3}_{l}\big)  \big(\Delta B^{i_3}_r\big) \Big]\\
		=&\sum_{l=j+1}^{k}\sum_{r=j+1}^{k}\Big(\mathbb{E}\Big[  \big(\Delta B^{i_1}_{l}\big)  (\Delta B^{i_1}_r) \Big]\Big)^3\\
		=&\frac16\sum_{l=j+1}^{k}\sum_{r=j+1}^{k}\bigg(\mathbb{E}\Big[  \big(\Delta B^{i_1}_{l}\big) ^3 (\Delta B^{i_1}_r) ^3\Big] - 9
		\mathbb{E}\Big[  \big(\Delta B^{i_1}_{l}\big) (\Delta B^{i_1}_r)\Big]\mathbb{E}\Big[  \big(\Delta B^{i_1}_{l}\big)^2\Big] \mathbb{E}\Big[(\Delta B^{i_1}_r)^2\Big]  \bigg)\\
		=&\frac16\sum_{l=j+1}^{k}\sum_{r=j+1}^{k}\mathbb{E}\Big[  \big(\Delta B^{i_1}_{l}\big) ^3 (\Delta B^{i_1}_r) ^3\Big] - \frac32 h^{4H} |t-s|^{2H} \\
		\le & C \mathbb{E}\Bigg[ \bigg(\sum_{l=j+1}^{k} \big(\Delta B^{i_1}_{l}\big) ^3\bigg)^2\Bigg]\le Ch^{6H-1}|t-s|.
	\end{align*}
	Similarly, if there exist two distinct integers $i_1,i_2\in\{1,\cdots,d\}$ such that $\alpha_{i_1}=2$ and $\alpha_{i_2}=1$, then we have from Lemmas \ref{lm3}-\ref{lm4} that 
	\begin{align*}
		0\le &\big\|\tilde{x}^{n,\alpha}_t-\tilde{x}^{n,\alpha}_s\big\|^2_{L^2(\Omega)} \\
		=&\mathbb{E}\Bigg[ \Bigg(\sum_{l=j+1}^{k} \big(\Delta B^{i_1}_l\big)^2\big(\Delta B^{i_2}_l\big) \Bigg) \Bigg(\sum_{r=j+1}^{k} \big(\Delta B^{i_1}_r\big)^2\big(\Delta B^{i_2}_r\big)  \Bigg)\Bigg]\\
		=&\sum_{l=j+1}^{k}\sum_{r=j+1}^{k}\mathbb{E}\Big[  \big(\Delta B^{i_1}_l\big)^2 \big(\Delta B^{i_1}_r\big)^2  \Big]\mathbb{E}\Big[  \big(\Delta B^{i_2}_l\big)  \big(\Delta B^{i_2}_r\big) \Big]\\
		=&\sum_{l=j+1}^{k}\sum_{r=j+1}^{k}
		\bigg(  \mathbb{E} \Big[  \big(\Delta B^{i_1}_l\big) ^2 \Big]  \mathbb{E} \Big[  \big(\Delta B^{i_1}_r\big) ^2 \Big] +2 \Big(\mathbb{E}\Big[  \big(\Delta B^{i_1}_l\big) \big(\Delta B^{i_1}_r\big) \Big] \Big)^2 \bigg)
		\mathbb{E}\Big[  \big(\Delta B^{i_2}_l\big)  \big(\Delta B^{i_2}_r\big)\Big] \\
		=&\sum_{l=j+1}^{k}\sum_{r=j+1}^{k}
		\left( h^{4H} +2 \Big(\mathbb{E}\Big[  \big(\Delta B^{i_1}_l\big) \big(\Delta B^{i_1}_r\big) \Big] \Big)^2\right)
		\mathbb{E}\left[  \big(\Delta B^{i_2}_l\big)  \big(\Delta B^{i_2}_r\big) \right]\\
		=&|t-s|^{2H}h^{4H}+2\sum_{l=j+1}^{k}\sum_{r=j+1}^{k}\Big(\mathbb{E}\Big[  \big(\Delta B^{i_1}_l\big) \big(\Delta B^{i_1}_r\big) \Big] \Big)^3
		\le Ch^{6H-1}|t-s|,
	\end{align*}
	where $|t-s|\ge h$ and $H<\frac12$ are used in the last inequality. 
	Collecting the above  estimates, we conclude the statement.
\end{proof}

\begin{proposition}\label{prop-2}
	Under the assumptions  in Theorem \ref{main-1} and the definition of $\tilde{x}^{n,\alpha}$ in \eqref{x-alpha}, then for any $0<\beta<H$ and $0<\epsilon<2H-\frac12$, there exists a random variable $G=G(\beta,\epsilon)$ independent of $h=\frac{T}{n}$ such that
	\begin{align*}
		&\sup_{0\leq s<t\leq T} \frac{\left|\int_{s}^{t}\int_{s}^{u_1}{\rm d}\tilde{x}_{u_2}^{n,\alpha^2}{\rm d}\tilde{x}^{n,\alpha^1}_{u_1}\right|}{|t-s|^{2\beta}}\le G h^{2H-1/2-\epsilon},\quad |\alpha^1|=1,~|\alpha^2|=3,\\
		&\sup_{0\leq s<t\leq T} \frac{\left|\int_{s}^{t}\int_{s}^{u_1}{\rm d}\tilde{x}_{u_2}^{n,\alpha^2}{\rm d}\tilde{x}^{n,\alpha^1}_{u_1}\right|}{|t-s|^{2\beta}}\le G h^{2H-1/2-\epsilon},\quad |\alpha^1|=3,~|\alpha^2|=1,\\
		&\sup_{0\leq s<t\leq T} \frac{\left|\int_{s}^{t}\int_{s}^{u_1}{\rm d}\tilde{x}_{u_2}^{n,\alpha^2}{\rm d}\tilde{x}^{n,\alpha^1}_{u_1}\right|}{|t-s|^{2\beta}}\le G h^{2H-1/2-\epsilon},\quad |\alpha^1|=|\alpha^2|=3.
	\end{align*}
\end{proposition}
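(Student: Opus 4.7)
My approach follows the same template as Proposition \ref{prop-1}: reduce the desired almost-sure H\"older bound on the two-parameter functional $F_{s,t}:=\int_s^t\int_s^{u_1}\mathrm{d}\tilde{x}^{n,\alpha^2}_{u_2}\mathrm{d}\tilde{x}^{n,\alpha^1}_{u_1}$ to an $L^2$ moment estimate at grid endpoints, then compute that moment by hand using Chen's relation, the piecewise-linear structure of $\tilde{x}^{n,\alpha}$, Wick's formula, and Lemmas \ref{lm3}--\ref{lm4}. Since $F_{s,t}$ lies in a finite inhomogeneous Wiener chaos of order at most $|\alpha^1|+|\alpha^2|\le 6$, hypercontractivity upgrades any $L^2$ bound to an $L^q$ bound for all $q$; a union bound over the $O(n^2)$ grid pairs, combined with within-interval estimates carried out exactly as in the first case of Proposition \ref{prop-1} and a triangle-inequality argument (via Chen's relation) to glue an arbitrary pair $(s,t)$ to its nearest grid points, then delivers the stated almost-sure rate $h^{2H-1/2-\epsilon}$ with H\"older exponent $2\beta$ for any $\beta<H$.

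\textbf{Core decomposition and diagonal piece.} The piecewise-linear structure of $\tilde{x}^{n,\alpha}$ yields, for any $0\le t_j<t_k\le T$,
\[F_{t_j,t_k}=\sum_{l=j+1}^{k}(\tilde{x}^{n,\alpha^2}_{t_{l-1}}-\tilde{x}^{n,\alpha^2}_{t_j})\Delta_l^{\alpha^1}+\tfrac12\sum_{l=j+1}^{k}\Delta_l^{\alpha^1+\alpha^2}=:F^{\mathrm{off}}_{t_j,t_k}+\tfrac12\,S_{t_j,t_k},\]
where $\Delta_l^\alpha:=\prod_i(\Delta B^i_l)^{\alpha_i}$. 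The diagonal piece $S_{t_j,t_k}$ is a sum of monomials of fBm increments of total degree $|\alpha^1|+|\alpha^2|\in\{4,6\}$; using independence between $B^1,\dots,B^d$ and invoking Lemma \ref{lm3} componentwise together with the product identities of Lemma \ref{lm4}, one obtains $\|S_{t_j,t_k}\|_{L^2}\le Ch^{(|\alpha^1|+|\alpha^2|)H-1}|t_k-t_j|$. Since $H>1/4$, this already dominates $Ch^{2H-1/2}|t_k-t_j|^{1/2}$ on $[0,T]$.

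\textbf{Off-diagonal piece and main obstacle.} The real work is controlling $\|F^{\mathrm{off}}_{t_j,t_k}\|_{L^2}$. Expanding the square produces a quadruple sum of Gaussian expectations of the form $\mathbb{E}[\Delta^{\alpha^2}_m\Delta^{\alpha^2}_{m'}\Delta^{\alpha^1}_l\Delta^{\alpha^1}_{l'}]$, which I would evaluate via Isserlis' formula as a sum over pairings of products of covariances $\mathbb{E}[\Delta B^i_a\Delta B^i_b]$. Each such covariance is only $O(h^{2H})$, so the naive pairing-by-pairing estimate is too crude; the mechanism already exhibited in the $(1,1,1)$ and $(2,1)$ subcases of Proposition \ref{prop-1} must be reused, namely invoking the algebraic identities of Lemma \ref{lm4} to rewrite products of covariances as linear combinations of diagonal Gaussian moments $\mathbb{E}[(\Delta B^i_a)^{2k}(\Delta B^i_b)^{2k}]$, whose subsequent double sums collapse to Lemma \ref{lm3}-type bounds. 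The principal obstacle is combinatorial: several subcases must be split according to which coordinates of $\alpha^1$ and $\alpha^2$ coincide (mirroring the splits in Proposition \ref{prop-1}), and for $|\alpha^1|=|\alpha^2|=3$ the underlying Gaussian vector has up to $12$ entries, generating a long list of pairings. I expect that only the ``on-diagonal'' pairings---those pinning the pair $(m,l')$ and $(m',l)$ to one another---contribute at leading order, while the remaining pairings either cancel through Lemma \ref{lm4} or are absorbed by Lemma \ref{lm3}, giving the target $\|F^{\mathrm{off}}_{t_j,t_k}\|_{L^2}\le Ch^{2H-1/2}|t_k-t_j|^{1/2}$ and, after hypercontractivity and the chaining argument described above, the claimed almost-sure bound.
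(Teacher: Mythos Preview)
Your stated target $\|F^{\mathrm{off}}_{t_j,t_k}\|_{L^2}\le Ch^{2H-1/2}|t_k-t_j|^{1/2}$ is not strong enough to close the argument. After hypercontractivity and your union bound you would obtain $|F_{t_j,t_k}|\le Gh^{2H-1/2-\epsilon}|t_k-t_j|^{1/2}$ almost surely, and you must then convert the exponent $1/2$ into $2\beta$ for $\beta$ arbitrarily close to $H>1/4$. The only leverage is $h\le|t_k-t_j|\le T$; trading gives $|t_k-t_j|^{1/2}\le h^{1/2-2\beta}|t_k-t_j|^{2\beta}$, hence $|F_{t_j,t_k}|\le Gh^{2H-2\beta-\epsilon}|t_k-t_j|^{2\beta}$, and as $\beta\uparrow H$ the power of $h$ collapses to $0$, not $2H-1/2$. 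In general any grid bound of the form $h^{\gamma}|t_k-t_j|^{\mu}$ must satisfy $\gamma+\mu\ge 4H-1/2$ to deliver the conclusion, whereas your target has only $\gamma+\mu=2H$. A careful Wick computation \emph{can} produce a sufficient bound in individual cases (for instance $\|F^{\mathrm{off}}\|_{L^2}\le Ch^{4H-1}|t_k-t_j|$ when $\alpha^1,\alpha^2$ live on independent coordinates), but verifying $\gamma+\mu\ge 4H-1/2$ uniformly over all index configurations of $(\alpha^1,\alpha^2)$ is precisely the combinatorial obstacle you acknowledge and do not carry out. Your bound on the diagonal piece $S$ has the same issue: for mixed multi-indices with odd components (e.g.\ $\alpha^1+\alpha^2=e_1+3e_2$) one gets $\|S\|_{L^2}\le Ch^{4H-1/2}|t_k-t_j|^{1/2}$, not $Ch^{4H-1}|t_k-t_j|$.

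The paper sidesteps the entire moment computation by a pathwise argument based on the discrete sewing lemma (Lemma~\ref{lm5}). Your $F^{\mathrm{off}}_{t_j,t_k}$ coincides with $f_{t_j,t_k}:=\int_{t_j}^{t_k}\int_{t_j}^{[u_1/h]h}\mathrm{d}\tilde x^{n,\alpha^2}\mathrm{d}\tilde x^{n,\alpha^1}$, which satisfies $f_{t_j,t_{j+1}}=0$ and whose defect \emph{factors},
\[
f_{t_j,t_l}-f_{t_j,t_k}-f_{t_k,t_l}=\Bigl(\int_{t_j}^{t_k}\mathrm{d}\tilde x^{n,\alpha^2}_{u_2}\Bigr)\Bigl(\int_{t_k}^{t_l}\mathrm{d}\tilde x^{n,\alpha^1}_{u_1}\Bigr),
\]
into two first-level increments. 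Each factor is already controlled almost surely: the $|\alpha|=3$ increment by Proposition~\ref{prop-1}, the $|\alpha|=1$ increment by Lemma~\ref{lm-2}. Multiplying gives $|\delta f_{t_j,t_k,t_l}|\le Gh^{2H-1/2-\epsilon}|t_l-t_j|^{1/2+H+\beta-\delta}$ with exponent $>1$, and Lemma~\ref{lm5} outputs $|f_{t_j,t_k}|\le Gh^{2H-1/2-\epsilon}|t_k-t_j|^{1/2+H+\beta-\delta}$ directly. The remaining pieces ($D_1,D_2,D_4,D_5,D_6$ in the paper's decomposition) are handled by elementary splitting as you describe. Thus no second-level $L^2$ estimate or Wick expansion is ever needed: the sewing lemma reduces the double-integral bound to the single-integral bounds already proved.
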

\begin{proof}
	Denote $\alpha^1=(\alpha^1_1,\cdots,\alpha^1_d)$ and $\alpha^2=(\alpha^2_1,\cdots,\alpha^2_d)$.
	Due to the upper bounds $|t-s|\le T$ and $h\le 1$, it is essential to consider the case that $\epsilon$ and $H-\beta$ are sufficiently small in the proof.
	
	First, assume $t_j\le s<t\le t_{j+1}$. It holds for  $H-\frac14<\beta<H$ that 
	\begin{align*}
		\left|\int_{s}^{t}\int_{s}^{u_1}{\rm d}\tilde{x}_{u_2}^{n,\alpha^2}{\rm d}\tilde{x}^{n,\alpha^1}_{u_1}\right|=&\int_{s}^{t}\int_{s}^{u_1}{\rm d}u_2{\rm d}u_1\left|  (\Delta B^{1}_{j+1})^{\alpha^1_1+\alpha^2_1}\cdots(\Delta B^{d}_{j+1})^{\alpha^1_d+\alpha^2_d}\right|h^{-2}\\
		\le & C \frac{(t-s)^2}{2h^2} \left|  (\Delta B^{1}_{j+1})^{\alpha^1_1+\alpha^2_1}\cdots(\Delta B^{d}_{j+1})^{\alpha^1_d+\alpha^2_d}\right|\\
		\le & G \Big(\frac{t-s}{h}\Big)^{2\beta} h^{4\beta}\le G h^{2H-\frac12-\epsilon}|t-s|^{2\beta}.
	\end{align*}
	
	Second, assume $t_{j-1}<s\le t_j\le t<t_{j+1}$. Then for $H-\frac14<\beta<H$, we have 
	\begin{align*}
		\left|\int_{s}^{t}\int_{s}^{u_1}{\rm d}\tilde{x}_{u_2}^{n,\alpha^2}{\rm d}\tilde{x}^{n,\alpha^1}_{u_1}\right|\le &\left|\int_{s}^{t_j}\int_{s}^{u_1}{\rm d}\tilde{x}_{u_2}^{n,\alpha^2}{\rm d}\tilde{x}^{n,\alpha^1}_{u_1}\right|+\left|\int_{t_j}^{t}\int_{s}^{t_j}{\rm d}\tilde{x}_{u_2}^{n,\alpha^2}{\rm d}\tilde{x}^{n,\alpha^1}_{u_1}\right|+\left|\int_{t_j}^{t}\int_{t_j}^{u_1}{\rm d}\tilde{x}_{u_2}^{n,\alpha^2}{\rm d}\tilde{x}^{n,\alpha^1}_{u_1}\right|\\
		\le & Gh^{2H-\frac12-\epsilon}\Big(|t_j-s|^{2\beta}+|t-t_j|^{\beta}|t_j-s|^{\beta}+|t-t_j|^{2\beta}\Big)\\
		\le & Gh^{2H-\frac12-\epsilon}|t-s|^{2\beta}.
	\end{align*}
	
	Next, we assume $t_{j-1}<s\le t_j<t_k\le t<t_{k+1}$ and divide the domain of the integral into six parts by
	\begin{align*}
		\int_{s}^{t}\int_{s}^{u_1}&=\int_{s}^{t_j}\int_{s}^{u_1}+\int_{t_j}^{t_k}\int_{s}^{t_j}+\int_{t_j}^{t_k}\int_{t_j}^{[u_1/n]h}+\int_{t_j}^{t_k}\int_{[u_1/n]h}^{u_1}+\int_{t_k}^{t}\int_{s}^{t_k}+\int_{t_k}^{t}\int_{t_k}^{u_1}\\
		&=:\int_{D_1}+\int_{D_2}+\int_{D_3}+\int_{D_4}+\int_{D_5}+\int_{D_6},
	\end{align*}
	where $[u_1/n]$ denotes the integer part of $u_1/n$.
	
	Suppose $|\alpha^1|=1$ and $|\alpha^2|=3$. The facts $0\le |t_j-s|\le h$ and $0\le| t-t_k|\le h$ yield 
	\begin{align*}
		\bigg|\int_{D_1}{\rm d}\tilde{x}_{u_2}^{n,\alpha^2}{\rm d}\tilde{x}^{n,\alpha^1}_{u_1}\bigg|+
		\bigg|\int_{D_6}{\rm d}\tilde{x}_{u_2}^{n,\alpha^2}{\rm d}\tilde{x}^{n,\alpha^1}_{u_1}\bigg|\le Gh^{4\beta}.
	\end{align*}
	Based on Lemma \ref{lm-2} and Lemma \ref{lm3}, we have  for $0<\delta<\frac12$ that
	\begin{align*}
		& \bigg|\int_{D_2}{\rm d}\tilde{x}_{u_2}^{n,\alpha^2}{\rm d}\tilde{x}^{n,\alpha^1}_{u_1}\bigg|+
		\bigg|\int_{D_5}{\rm d}\tilde{x}_{u_2}^{n,\alpha^2}{\rm d}\tilde{x}^{n,\alpha^1}_{u_1}\bigg|\\
		\le & \bigg|\int_{t_j}^{t_k}\tilde{x}_{u_1}^{n,\alpha^1}\bigg|\bigg|\int_{s}^{t_j}{\rm d}\tilde{x}^{n,\alpha^2}_{u_2}\bigg|+
		\bigg|\int_{t_k}^{t}\tilde{x}_{u_1}^{n,\alpha^1}\bigg|\bigg|\int_{s}^{t_k}{\rm d}\tilde{x}^{n,\alpha^2}_{u_2}\bigg|\\
		\le & G|t_k-t_j|^\beta \Big(\frac{t_j-s}{h}\Big)h^{3\beta}+ G\Big(\frac{t-t_k}{h}\Big)h^\beta h^{3H-\frac12-\epsilon}|t_k-s|^{\frac12-\delta}\\
		\le & Gh^{3\beta}|t-s|^{\beta}+Gh^{3H+\beta-\frac12-\epsilon}|t-s|^{\frac12-\delta},
	\end{align*}
	and 
	\begin{align*}
		&\bigg|\int_{D_4}{\rm d}\tilde{x}_{u_2}^{n,\alpha^2}{\rm d}\tilde{x}^{n,\alpha^1}_{u_1}\bigg|
		\le  \sum_{l=j+1}^{k}\bigg|\int_{t_{l-1}}^{t_l}\int_{t_{l-1}}^{u_1}{\rm d}\tilde{x}_{u_2}^{n,\alpha^2}{\rm d}\tilde{x}^{n,\alpha^1}_{u_1}\bigg|
		\le G \Big(\frac{t_k-t_j}{h}\Big) h^{4\beta}
		\le G  h^{4H-1-\epsilon}|t-s|.
	\end{align*}
	For the part of $\int_{D_3}{\rm d}\tilde{x}_{u_2}^{n,\alpha^2}{\rm d}\tilde{x}^{n,\alpha^1}_{u_1}$, define $f_{t_j,t_k}=\int_{t_j}^{t_k}\int_{t_j}^{[u_1/n]h}{\rm d}\tilde{x}_{u_2}^{n,\alpha^2}{\rm d}\tilde{x}^{n,\alpha^1}_{u_1}$. Then $f_{t_j,t_{j+1}}=0$, $j=0,\cdots,n-1$, and 
	\begin{align*}
		& \big|f_{t_j,t_l}-f_{t_j,t_k}-f_{t_k,t_l}\big|\\
		= & \bigg|\int_{t_j}^{t_l}\int_{t_j}^{[u_1/n]h}{\rm d}\tilde{x}_{u_2}^{n,\alpha^2}{\rm d}\tilde{x}^{n,\alpha^1}_{u_1}-\int_{t_j}^{t_k}\int_{t_j}^{[u_1/n]h}{\rm d}\tilde{x}_{u_2}^{n,\alpha^2}{\rm d}\tilde{x}^{n,\alpha^1}_{u_1}-\int_{t_k}^{t_l}\int_{t_k}^{[u_1/n]h}{\rm d}\tilde{x}_{u_2}^{n,\alpha^2}{\rm d}\tilde{x}^{n,\alpha^1}_{u_1}\bigg|\\
		= &
		\bigg|\int_{t_k}^{t_l}\int_{t_j}^{t_k}{\rm d}\tilde{x}_{u_2}^{n,\alpha^2}{\rm d}\tilde{x}^{n,\alpha^1}_{u_1}\bigg|
		\le  \bigg|\int_{t_k}^{t_l}\tilde{x}_{u_1}^{n,\alpha^1}\bigg|\bigg|\int_{t_j}^{t_k}{\rm d}\tilde{x}^{n,\alpha^2}_{u_2}\bigg|\\
		\le & G|t_l-t_k|^\beta h^{3H-1/2-\epsilon}|t_k-t_j|^{1/2-\delta},\\
		\le & G h^{2H-1/2-\epsilon}|t_l-t_j|^{1/2+H+\beta-\delta},\quad j<k<l.
	\end{align*} 
	Taking $0<\delta<H-\beta$ and $\frac14<\beta<H$ such that $1/2+H+\beta-\delta>1$, we apply Lemma \ref{lm5} to derive
	\begin{align*}
		\bigg|\int_{D_3}{\rm d}\tilde{x}_{u_2}^{n,\alpha^2}{\rm d}\tilde{x}^{n,\alpha^1}_{u_1}\bigg|=\big|f_{t_j,t_k}\big|
		\le  Gh^{2H-1/2-\epsilon}|t-s|^{1/2+H+\beta-\delta}.
	\end{align*}
	Using $|t-s|\ge h$, we obtain 
	\begin{align}\label{est-1}
		\bigg|\int_{t_j}^{t_k}\int_{t_j}^{u_1}{\rm d}\tilde{x}_{u_2}^{n,\alpha^2}{\rm d}\tilde{x}^{n,\alpha^1}_{u_1}\bigg|\le \bigg|\int_{D_3}{\rm d}\tilde{x}_{u_2}^{n,\alpha^2}{\rm d}\tilde{x}^{n,\alpha^1}_{u_1}\bigg|+\bigg|\int_{D_4}{\rm d}\tilde{x}_{u_2}^{n,\alpha^2}{\rm d}\tilde{x}^{n,\alpha^1}_{u_1}\bigg|
		\le Gh^{2H-1/2-\epsilon}|t-s|.
	\end{align}
	The above estimates produce
	\begin{align*}
		\sup_{0\leq s<t\leq T} \frac{\left|\int_{s}^{t}\int_{s}^{u_1}{\rm d}\tilde{x}_{u_2}^{n,\alpha^2}{\rm d}\tilde{x}^{n,\alpha^1}_{u_1}\right|}{|t-s|^{2\beta}}\le G h^{2H-1/2-\epsilon},\quad |\alpha^1|=1,~|\alpha^2|=3.
	\end{align*}
	Similarly, we have
	\begin{align*}
		\sup_{0\leq s<t\leq T} \frac{\left|\int_{s}^{t}\int_{s}^{u_1}{\rm d}\tilde{x}_{u_2}^{n,\alpha^2}{\rm d}\tilde{x}^{n,\alpha^1}_{u_1}\right|}{|t-s|^{2\beta}}\le G h^{2H-1/2-\epsilon},\quad |\alpha^1|=3,~|\alpha^2|=1.
	\end{align*}
	
	As to the case $|\alpha^1|=3$ and $|\alpha^2|=3$, repeating the techniques above, we have 
	\begin{align*}
		&\bigg|\int_{D_1}{\rm d}\tilde{x}_{u_2}^{n,\alpha^2}{\rm d}\tilde{x}^{n,\alpha^1}_{u_1}\bigg|+
		\bigg|\int_{D_6}{\rm d}\tilde{x}_{u_2}^{n,\alpha^2}{\rm d}\tilde{x}^{n,\alpha^1}_{u_1}\bigg|\le Gh^{6\beta},\\
		&\bigg|\int_{D_2}{\rm d}\tilde{x}_{u_2}^{n,\alpha^2}{\rm d}\tilde{x}^{n,\alpha^1}_{u_1}\bigg|+
		\bigg|\int_{D_5}{\rm d}\tilde{x}_{u_2}^{n,\alpha^2}{\rm d}\tilde{x}^{n,\alpha^1}_{u_1}\bigg|\le Gh^{6H-\frac12-\epsilon}|t-s|^{\frac12-\delta},\quad 0<\delta<\frac12,\\
		&\bigg|\int_{D_4}{\rm d}\tilde{x}_{u_2}^{n,\alpha^2}{\rm d}\tilde{x}^{n,\alpha^1}_{u_1}\bigg|\le G  h^{6H-1-\epsilon}|t-s|.
	\end{align*}
	Moreover, using 
	\begin{align*}
		\big|f_{t_j,t_l}-f_{t_j,t_k}-f_{t_k,t_l}\big|\le  \bigg|\int_{t_k}^{t_l}\tilde{x}_{u_1}^{n,\alpha^1}\bigg|\bigg|\int_{t_j}^{t_k}{\rm d}\tilde{x}^{n,\alpha^2}_{u_2}\bigg|
		\le Gh^{6H-1-\frac{\epsilon}{2}}|t-s|^{1-2\delta},\quad j<k<l,
	\end{align*} 
	and letting $0<\delta<\frac{\epsilon}{4}$ so that $1-2\delta+\frac{\epsilon}{2}>1$, we deduce from Lemma \ref{lm5} that
	\begin{align*}
		\bigg|\int_{D_3}{\rm d}\tilde{x}_{u_2}^{n,\alpha^2}{\rm d}\tilde{x}^{n,\alpha^1}_{u_1}\bigg|
		=\big|f_{t_j,t_k}\big|
		\le  Gh^{6H-1-\epsilon}|t-s|^{1-2\delta+\frac{\epsilon}{2}}.
	\end{align*}
	Therefore, we have
	\begin{align}\label{est-2}
		\bigg|\int_{t_j}^{t_k}\int_{t_j}^{u_1}{\rm d}\tilde{x}_{u_2}^{n,\alpha^2}{\rm d}\tilde{x}^{n,\alpha^1}_{u_1}\bigg|\le \bigg|\int_{D_3}{\rm d}\tilde{x}_{u_2}^{n,\alpha^2}{\rm d}\tilde{x}^{n,\alpha^1}_{u_1}\bigg|+\bigg|\int_{D_4}{\rm d}\tilde{x}_{u_2}^{n,\alpha^2}{\rm d}\tilde{x}^{n,\alpha^1}_{u_1}\bigg|
		\le Gh^{6H-1-\epsilon}|t-s|.
	\end{align}	
	These inequalities imply
	\begin{align*}
		\sup_{0\leq s<t\leq T} \frac{\left|\int_{s}^{t}\int_{s}^{u_1}{\rm d}\tilde{x}_{u_2}^{n,\alpha^2}{\rm d}\tilde{x}^{n,\alpha^1}_{u_1}\right|}{|t-s|^{2\beta}}\le G h^{2H-1/2-\epsilon},\quad |\alpha^1|=3,~|\alpha^2|=3.
	\end{align*}
\end{proof}

\begin{proposition}\label{prop-3}
	Under the assumptions  in Theorem \ref{main-1} and the definition of $\tilde{x}^{n,\alpha}$ in \eqref{x-alpha}, then for any $0<\beta<H$ and $0<\epsilon<2H-\frac12$, there exists a random variable $G=G(\beta,\epsilon)$ independent of $h=\frac{T}{n}$ such that
	\begin{align*}
		&\sup_{0\leq s<t\leq T} \frac{\left|\int_{s}^{t}\int_{s}^{u_1}\int_{s}^{u_2}{\rm d}\tilde{x}_{u_3}^{n,\alpha^3}{\rm d}\tilde{x}_{u_2}^{n,\alpha^2}{\rm d}\tilde{x}^{n,\alpha^1}_{u_1}\right|}{|t-s|^{3\beta}}\le G h^{2H-1/2-\epsilon},\quad |\alpha^1|=|\alpha^2|=|\alpha^3|=3,\\
		&\sup_{0\leq s<t\leq T} \frac{\left|\int_{s}^{t}\int_{s}^{u_1}\int_{s}^{u_2}{\rm d}\tilde{x}_{u_3}^{n,\alpha^3}{\rm d}\tilde{x}_{u_2}^{n,\alpha^2}{\rm d}\tilde{x}^{n,\alpha^1}_{u_1}\right|}{|t-s|^{3\beta}}\le G h^{2H-1/2-\epsilon},\quad |\alpha^1|=|\alpha^2|=3,~|\alpha^3|=1,\\
		&\sup_{0\leq s<t\leq T} \frac{\left|\int_{s}^{t}\int_{s}^{u_1}\int_{s}^{u_2}{\rm d}\tilde{x}_{u_3}^{n,\alpha^3}{\rm d}\tilde{x}_{u_2}^{n,\alpha^2}{\rm d}\tilde{x}^{n,\alpha^1}_{u_1}\right|}{|t-s|^{3\beta}}\le G h^{2H-1/2-\epsilon},\quad |\alpha^2|=|\alpha^3|=3,~|\alpha^1|=1,\\
		&\sup_{0\leq s<t\leq T} \frac{\left|\int_{s}^{t}\int_{s}^{u_1}\int_{s}^{u_2}{\rm d}\tilde{x}_{u_3}^{n,\alpha^3}{\rm d}\tilde{x}_{u_2}^{n,\alpha^2}{\rm d}\tilde{x}^{n,\alpha^1}_{u_1}\right|}{|t-s|^{3\beta}}\le G h^{2H-1/2-\epsilon},\quad |\alpha^1|=|\alpha^3|=3,~|\alpha^2|=1,\\
		&\sup_{0\leq s<t\leq T} \frac{\left|\int_{s}^{t}\int_{s}^{u_1}\int_{s}^{u_2}{\rm d}\tilde{x}_{u_3}^{n,\alpha^3}{\rm d}\tilde{x}_{u_2}^{n,\alpha^2}{\rm d}\tilde{x}^{n,\alpha^1}_{u_1}\right|}{|t-s|^{3\beta}}\le G h^{2H-1/2-\epsilon},\quad |\alpha^1|=3,~|\alpha^2|=|\alpha^3|=1,\\
		&\sup_{0\leq s<t\leq T} \frac{\left|\int_{s}^{t}\int_{s}^{u_1}\int_{s}^{u_2}{\rm d}\tilde{x}_{u_3}^{n,\alpha^3}{\rm d}\tilde{x}_{u_2}^{n,\alpha^2}{\rm d}\tilde{x}^{n,\alpha^1}_{u_1}\right|}{|t-s|^{3\beta}}\le G h^{2H-1/2-\epsilon},\quad |\alpha^2|=3,~|\alpha^1|=|\alpha^3|=1,\\
		&\sup_{0\leq s<t\leq T} \frac{\left|\int_{s}^{t}\int_{s}^{u_1}\int_{s}^{u_2}{\rm d}\tilde{x}_{u_3}^{n,\alpha^3}{\rm d}\tilde{x}_{u_2}^{n,\alpha^2}{\rm d}\tilde{x}^{n,\alpha^1}_{u_1}\right|}{|t-s|^{3\beta}}\le G h^{2H-1/2-\epsilon},\quad |\alpha^3|=3,~|\alpha^1|=|\alpha^2|=1.
	\end{align*}
\end{proposition}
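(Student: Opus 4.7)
My plan is to mirror the strategy used for Proposition \ref{prop-2}, treating the seven configurations of $(|\alpha^1|,|\alpha^2|,|\alpha^3|)\in\{1,3\}^3\setminus\{(1,1,1)\}$ via the same decomposition-plus-sewing scheme. When $s$ and $t$ lie in the same or in adjacent grid cells, the piecewise linear structure \eqref{x-alpha} of $\tilde{x}^{n,\alpha}$ reduces the triple integral to an elementary product of $(\Delta B)^{\alpha}$ factors times a polynomial in $(t-s)$; since $|\alpha^1|+|\alpha^2|+|\alpha^3|\ge 5$ in every configuration of Proposition \ref{prop-3}, combining this with $|t-s|\le h$ gives the desired bound $Gh^{2H-\frac12-\epsilon}|t-s|^{3\beta}$ immediately. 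The substantive case is $t_{j-1}<s\le t_j<t_k\le t<t_{k+1}$; here I would expand the triple integral via Chen's identity around $t_j$ and $t_k$ into a grid-aligned bulk term $f_{t_j,t_k}:=\int_{t_j}^{t_k}\int_{t_j}^{u_1}\int_{t_j}^{u_2}{\rm d}\tilde{x}^{n,\alpha^3}_{u_3}{\rm d}\tilde{x}^{n,\alpha^2}_{u_2}{\rm d}\tilde{x}^{n,\alpha^1}_{u_1}$ plus a collection of boundary pieces, each of the latter being a product of an increment and a lower-order iterated integral, controllable via Lemma \ref{lm3}, Lemma \ref{lm-2}, Proposition \ref{prop-1} and Proposition \ref{prop-2}.

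For the bulk term I would apply the discrete sewing lemma (Lemma \ref{lm5}). By the Chen relation, the additivity defect of $f$ at a grid midpoint $t_k\in(t_j,t_l)$ takes the form
\begin{align*}
	f_{t_j,t_l}-f_{t_j,t_k}-f_{t_k,t_l}&=\big(\tilde{x}^{n,\alpha^3}_{t_k}-\tilde{x}^{n,\alpha^3}_{t_j}\big)\int_{t_k}^{t_l}\int_{t_k}^{u_1}{\rm d}\tilde{x}^{n,\alpha^2}_{u_2}{\rm d}\tilde{x}^{n,\alpha^1}_{u_1}\\
	&\quad+\big(\tilde{x}^{n,\alpha^1}_{t_l}-\tilde{x}^{n,\alpha^1}_{t_k}\big)\int_{t_j}^{t_k}\int_{t_j}^{u_2}{\rm d}\tilde{x}^{n,\alpha^3}_{u_3}{\rm d}\tilde{x}^{n,\alpha^2}_{u_2},
\end{align*}
so both factors in each product are evaluated on grid-aligned intervals. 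I would bound the increment factor via Proposition \ref{prop-1} when the relevant $|\alpha|=3$, and via the identity $\tilde{x}^{n,\alpha}_{t_k}=B^i_{t_k}$ combined with standard fBm H\"older regularity (rate arbitrarily close to $H$) when $|\alpha|=1$ with $\alpha_i=1$. The double-integral factor I would bound using the \emph{gridded} estimates \eqref{est-1} and \eqref{est-2} established inside the proof of Proposition \ref{prop-2}, which deliver the sharper rates $Gh^{2H-\frac12-\epsilon}|t-s|$ or $Gh^{6H-1-\epsilon}|t-s|$ rather than the looser H\"older statement of Proposition \ref{prop-2}.

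A direct case-by-case computation then shows that in each of the seven configurations the defect is bounded by $Gh^{r}|t_l-t_j|^{\mu}$ with $r\ge 2H-\frac12$ and some $\mu>1$, provided $\beta<H$ is chosen close enough to $H$ and $\epsilon,\delta$ are small enough. Lemma \ref{lm5} then produces $|f_{t_j,t_k}|\le Gh^{r}|t-s|^{\mu}$, and dividing by $|t-s|^{3\beta}$ (invoking $|t-s|\ge h$ whenever $\mu<3\beta$) yields the required $Gh^{2H-\frac12-\epsilon}$. The main obstacle is the uniform bookkeeping across all seven configurations: in those with multiple $|\alpha^i|=3$ one must rely on the gridded double-integral rate rather than the H\"older rate to push the exponent $\mu$ past $1$, whereas in those with multiple $|\alpha^i|=1$ the critical observation is that the sewing midpoint is a grid point, so the fBm H\"older regularity of $x^n$ at grid points is available and suffices. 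Once the correct ingredient is selected in each case, all seven estimates reduce to the common rate $h^{2H-\frac12-\epsilon}$.
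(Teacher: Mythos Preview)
Your strategy is correct and in fact more uniform than the paper's own argument, with one technical point you need to address. You propose to sew the \emph{full} gridded triple integral $f_{t_j,t_k}=\int_{t_j}^{t_k}\int_{t_j}^{u_1}\int_{t_j}^{u_2}{\rm d}\tilde{x}^{n,\alpha^3}_{u_3}{\rm d}\tilde{x}^{n,\alpha^2}_{u_2}{\rm d}\tilde{x}^{n,\alpha^1}_{u_1}$, whose Chen defect consists of exactly the two product terms you wrote; a direct check using \eqref{est-1}--\eqref{est-2} for the gridded double integrals and Proposition~\ref{prop-1} (or fBm H\"older regularity) for the increments gives $\mu>1$ with rate at least $h^{2H-1/2-\epsilon}$ in all seven configurations, including $(1,1,3)$. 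The paper instead sews the truncated object $\int_{t_j}^{t_k}\int_{t_j}^{[u_1/n]h}\int_{t_j}^{u_2}$; this truncation forces $f_{t_j,t_{j+1}}=0$ but introduces a \emph{third} defect term $(\int_{t_j}^{t_k}{\rm d}\tilde x^{n,\alpha^3})\int_{t_k}^{t_l}\int_{[u_1/n]h}^{u_1}{\rm d}\tilde x^{n,\alpha^2}{\rm d}\tilde x^{n,\alpha^1}$, which for $|\alpha^1|=|\alpha^2|=1$ carries too little $h$-weight when $H\le\tfrac13$. This is precisely why the paper must treat $(1,1,3)$ separately via the time-reversed decomposition $\int_s^t\int_{u_3}^t\int_{u_2}^t$. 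Your route avoids this bifurcation.

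The point you have glossed over is that Lemma~\ref{lm5} explicitly requires $f_{t_j,t_{j+1}}=0$, which your $f$ does not satisfy (it equals $\tfrac16\prod_i(\Delta B^i_{j+1})^{\alpha^1_i+\alpha^2_i+\alpha^3_i}$). The standard fix is to apply Lemma~\ref{lm5} to $\hat f_{t_j,t_k}:=f_{t_j,t_k}-\sum_{l=j+1}^k f_{t_{l-1},t_l}$, which has the same Chen defect and vanishes on adjacent grid points, and then bound the subtracted diagonal sum separately. Since $|\alpha^1|+|\alpha^2|+|\alpha^3|\ge 5$, that sum is at most $G\tfrac{t_k-t_j}{h}h^{5\beta}$, which is dominated by $Gh^{2H-1/2-\epsilon}|t-s|^{3\beta}$ for $\beta$ close to $H$; this plays exactly the role of the paper's $D_4$ piece. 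Once you insert this step, your argument is complete and arguably cleaner.
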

\begin{proof}
	According to smilar arguments as in Propositions \ref{prop-1}-\ref{prop-2}, without loss of generality, we regard $\epsilon$ and $H-\beta$ as sufficiently small parameters in the proof, and focus on the case  $t_{j-1}<s\le t_j< t_k\le t<t_{k+1}$. 
	
	
	If $(|\alpha^1|,|\alpha^2|,|\alpha^3|)\in\{(3,3,3),(3,3,1),(1,3,3),(3,1,3),(3,1,1),(1,3,1)\}$, we decompose the domain of the integral into
	\begin{align*}
		\int_{s}^{t}\int_{s}^{u_1}\int_{s}^{u_2}=&\int_{s}^{t_j}\int_{s}^{u_1}\int_{s}^{u_2}+
		\int_{t_j}^{t_k}\int_{s}^{t_j}\int_{s}^{u_2}+
		\int_{t_j}^{t_k}\int_{t_j}^{[u_1/n]h}\int_{t_j}^{u_2}\\
		&+\int_{t_j}^{t_k}\int_{[u_1/n]h}^{u_1}\int_{t_j}^{u_2}+
		\int_{t_j}^{t_k}\int_{t_j}^{u_1}\int_{s}^{t_j}+
		\int_{t_k}^{t}\int_{s}^{u_1}\int_{s}^{u_2}\\
		=&:\int_{D_1}+\int_{D_2}+\int_{D_3}+\int_{D_4}+\int_{D_5}+\int_{D_6},
	\end{align*}
	where $[u_1/n]$ denotes the integer part of $u_1/n$. Let $H-\frac14<\beta<H$. Then we get 
	\begin{align*}
		&\left|\int_{D_1}{\rm d}\tilde{x}_{u_3}^{n,\alpha^3}{\rm d}\tilde{x}_{u_2}^{n,\alpha^2}{\rm d}\tilde{x}^{n,\alpha^1}_{u_1}\right|\le Gh^{5\beta}\le G h^{2H-1/2-\epsilon}|t-s|^{3\beta}.
	\end{align*}
	Together with Propositions \ref{prop-1}-\ref{prop-2} and Lemma \ref{lm-2}, we deduce
	\begin{align*}
		\left|\int_{D_2}{\rm d}\tilde{x}_{u_3}^{n,\alpha^3}{\rm d}\tilde{x}_{u_2}^{n,\alpha^2}{\rm d}\tilde{x}^{n,\alpha^1}_{u_1}\right|= &
		\bigg|\int_{t_j}^{t_k}{\rm d}\tilde{x}^{n,\alpha^1}_{u_1}\bigg|
		\bigg|  \int_{s}^{t_j}\int_{s}^{u_2}  {\rm d}\tilde{x}_{u_3}^{n,\alpha^3}{\rm d}\tilde{x}_{u_2}^{n,\alpha^2}\bigg|\\
		\le &Gh^{2H-1/2-\epsilon}|t_k-t_j|^{\beta}|t_j-s|^{2\beta}\\
		\le &Gh^{2H-1/2-\epsilon}|t-s|^{3\beta},
	\end{align*}
	and
	\begin{align*}
		\left|\int_{D_5}{\rm d}\tilde{x}_{u_3}^{n,\alpha^3}{\rm d}\tilde{x}_{u_2}^{n,\alpha^2}{\rm d}\tilde{x}^{n,\alpha^1}_{u_1}\right|
		= &
		\bigg|  \int_{t_j}^{t_k}\int_{t_j}^{u_1}  {\rm d}\tilde{x}_{u_2}^{n,\alpha^2}{\rm d}\tilde{x}_{u_1}^{n,\alpha^1}\bigg|\bigg|\int_{s}^{t_j}{\rm d}\tilde{x}^{n,\alpha^3}_{u_3}\bigg|\\
		\le &Gh^{2H-1/2-\epsilon}|t_k-t_j|^{2\beta}|t_j-s|^{\beta}\\
		\le &Gh^{2H-1/2-\epsilon}|t-s|^{3\beta}.
	\end{align*}
	Meanwhile, it holds that 
	\begin{align*}
		\left|\int_{D_6}{\rm d}\tilde{x}_{u_3}^{n,\alpha^3}{\rm d}\tilde{x}_{u_2}^{n,\alpha^2}{\rm d}\tilde{x}^{n,\alpha^1}_{u_1}\right|
		\le &
		\int_{t_k}^{t}\sup_{t_k<u_1<t}\bigg|\int_{s}^{u_1}\int_{s}^{u_2}{\rm d}\tilde{x}_{u_3}^{n,\alpha^3}{\rm d}\tilde{x}_{u_2}^{n,\alpha^2}\bigg|{\rm d}\big|\tilde{x}^{n,\alpha^1}_{u_1}\big|\\
		\le &
		\left\{
		\begin{aligned}
			&G|t-s|^{2\beta}\Big(\frac{t-t_k}{h}\Big)h^{3\beta},\qquad (|\alpha^1|,|\alpha^2|,|\alpha^3|)=(3,1,1)\\
			&Gh^{2H-1/2-\epsilon}|t-s|^{2\beta}\Big(\frac{t-t_k}{h}\Big)h^\beta,\quad {\rm else}\\
		\end{aligned}
		\right.\\
		\le &Gh^{2H-1/2-\epsilon}|t-s|^{3\beta}.
	\end{align*}	
	To estimate $\int_{D_4}{\rm d}\tilde{x}_{u_3}^{n,\alpha^3}{\rm d}\tilde{x}_{u_2}^{n,\alpha^2}{\rm d}\tilde{x}^{n,\alpha^1}_{u_1}$,  noticing $[u_2/n]=[u_1/n]$ for $[u_1/n]h<u_2<u_1$, we consider
	\begin{align*}
		&\left|\int_{D_4}{\rm d}\tilde{x}_{u_3}^{n,\alpha^3}{\rm d}\tilde{x}_{u_2}^{n,\alpha^2}{\rm d}\tilde{x}^{n,\alpha^1}_{u_1}\right|\\
		\le &
		\left|\int_{t_j}^{t_k}\int_{[u_1/n]h}^{u_1}\int_{t_j}^{[u_1/n]h}{\rm d}\tilde{x}_{u_3}^{n,\alpha^3}{\rm d}\tilde{x}_{u_2}^{n,\alpha^2}{\rm d}\tilde{x}^{n,\alpha^1}_{u_1}\right|+
		\left|\int_{t_j}^{t_k}\int_{[u_1/n]h}^{u_1}\int_{[u_1/n]h}^{u_2}{\rm d}\tilde{x}_{u_3}^{n,\alpha^3}{\rm d}\tilde{x}_{u_2}^{n,\alpha^2}{\rm d}\tilde{x}^{n,\alpha^1}_{u_1}\right|.
	\end{align*}
	For any $\frac12H+\frac{1}{8}<\beta<H$, we get
	\begin{align*}
		&\left|\int_{t_j}^{t_k}\int_{[u_1/n]h}^{u_1}\int_{t_j}^{[u_1/n]h}{\rm d}\tilde{x}_{u_3}^{n,\alpha^3}{\rm d}\tilde{x}_{u_2}^{n,\alpha^2}{\rm d}\tilde{x}^{n,\alpha^1}_{u_1}\right|\\
		\le&\sum_{l=j+1}^{k}\int_{t_{l-1}}^{t_l}\sup_{t_{l-1}<u_1<t_l}\bigg|\int_{[u_1/n]h}^{u_1}{\rm d}\tilde{x}_{u_2}^{n,\alpha^2}\bigg|\bigg|\int_{t_j}^{[u_1/n]h}{\rm d}\tilde{x}_{u_3}^{n,\alpha^3}\bigg|{\rm d}\big|\tilde{x}^{n,\alpha^1}_{u_1}\big|\\
		\le &
		G\Big(\frac{t_k-t_j}{h}\Big)h^{4\beta}\big|t_k -t_j\big|^\beta \le Gh^{2H-\frac12-\epsilon}|t-s|,
	\end{align*}
	and
	\begin{align*}
		&\left|\int_{t_j}^{t_k}\int_{[u_1/n]h}^{u_1}\int_{[u_1/n]h}^{u_2}{\rm d}\tilde{x}_{u_3}^{n,\alpha^3}{\rm d}\tilde{x}_{u_2}^{n,\alpha^2}{\rm d}\tilde{x}^{n,\alpha^1}_{u_1}\right|\\
		\le&\sum_{l=j+1}^{k}\int_{t_{l-1}}^{t_l}\sup_{t_{l-1}<u_1<t_l}\bigg|\int_{[u_1/n]h}^{u_1}\int_{[u_1/n]h}^{u_2}{\rm d}\tilde{x}_{u_3}^{n,\alpha^3}{\rm d}\tilde{x}_{u_2}^{n,\alpha^2}\bigg|{\rm d}\big|\tilde{x}^{n,\alpha^1}_{u_1}\big|\\
		\le &
		G\Big(\frac{t_k-t_j}{h}\Big)h^{5\beta}\le h^{2H-\frac12-\epsilon}|t-s|.
	\end{align*}
	Gathering the above two estimates, we obtain 
	\begin{align*}
		\left|\int_{D_4}{\rm d}\tilde{x}_{u_3}^{n,\alpha^3}{\rm d}\tilde{x}_{u_2}^{n,\alpha^2}{\rm d}\tilde{x}^{n,\alpha^1}_{u_1}\right|\le 
		Gh^{2H-\frac12-\epsilon}|t-s|^{3\beta}.
	\end{align*}
	To estimate $\int_{D_3}{\rm d}\tilde{x}_{u_3}^{n,\alpha^3}{\rm d}\tilde{x}_{u_2}^{n,\alpha^2}{\rm d}\tilde{x}^{n,\alpha^1}_{u_1}$, we define
	$f_{t_j,t_k}=\int_{t_j}^{t_k}\int_{t_j}^{[u_1/n]h}\int_{t_j}^{u_2}{\rm d}\tilde{x}_{u_3}^{n,\alpha^3}{\rm d}\tilde{x}_{u_2}^{n,\alpha^2}{\rm d}\tilde{x}^{n,\alpha^1}_{u_1}$ such that $f_{t_j,t_{j+1}}=0$ and for integers $j<k<l$,
	\begin{align*}
		&\big|f_{t_j,t_l}-f_{t_j,t_k}-f_{t_k,t_l}\big|\\
		\le &\bigg|\int_{t_k}^{t_l}\int_{t_j}^{t_k}\int_{t_j}^{u_2}{\rm d}\tilde{x}_{u_3}^{n,\alpha^3}{\rm d}\tilde{x}_{u_2}^{n,\alpha^2}{\rm d}\tilde{x}^{n,\alpha^1}_{u_1}\bigg|
		+\bigg|\int_{t_k}^{t_l}\int_{t_k}^{[u_1/n]h}\int_{t_j}^{t_k}{\rm d}\tilde{x}_{u_3}^{n,\alpha^3}{\rm d}\tilde{x}_{u_2}^{n,\alpha^2}{\rm d}\tilde{x}^{n,\alpha^1}_{u_1}\bigg|\\
		\le&\bigg|\int_{t_k}^{t_l}\int_{t_j}^{t_k}\int_{t_j}^{u_2}{\rm d}\tilde{x}_{u_3}^{n,\alpha^3}{\rm d}\tilde{x}_{u_2}^{n,\alpha^2}{\rm d}\tilde{x}^{n,\alpha^1}_{u_1}\bigg|\\ &+\bigg|\int_{t_k}^{t_l}\int_{t_k}^{u_1}\int_{t_j}^{t_k}{\rm d}\tilde{x}_{u_3}^{n,\alpha^3}{\rm d}\tilde{x}_{u_2}^{n,\alpha^2}{\rm d}\tilde{x}^{n,\alpha^1}_{u_1}-\int_{t_k}^{t_l}\int_{[u_1/n]h}^{u_1}\int_{t_j}^{t_k}{\rm d}\tilde{x}_{u_3}^{n,\alpha^3}{\rm d}\tilde{x}_{u_2}^{n,\alpha^2}{\rm d}\tilde{x}^{n,\alpha^1}_{u_1}\bigg|\\
		\le&\bigg|\int_{t_k}^{t_l}{\rm d}\tilde{x}^{n,\alpha^1}_{u_1}\bigg|\bigg|\int_{t_j}^{t_k}\int_{t_j}^{u_2}{\rm d}\tilde{x}_{u_3}^{n,\alpha^3}{\rm d}\tilde{x}_{u_2}^{n,\alpha^2}\bigg|\\ &+\bigg|\int_{t_k}^{t_l}\int_{t_k}^{u_1}{\rm d}\tilde{x}_{u_2}^{n,\alpha^2}{\rm d}\tilde{x}^{n,\alpha^1}_{u_1}\bigg|\bigg|\int_{t_j}^{t_k}{\rm d}\tilde{x}_{u_3}^{n,\alpha^3}\bigg|+\bigg|\int_{t_k}^{t_l}\int_{[u_1/n]h}^{u_1}{\rm d}\tilde{x}_{u_2}^{n,\alpha^2}{\rm d}\tilde{x}^{n,\alpha^1}_{u_1}\bigg|\bigg|\int_{t_j}^{t_k}{\rm d}\tilde{x}_{u_3}^{n,\alpha^3}\bigg|.
	\end{align*}
	Lemma \ref{lm-2} and \eqref{est-1}-\eqref{est-2} yield
	\begin{align*}
		&\bigg|\int_{t_k}^{t_l}{\rm d}\tilde{x}^{n,\alpha^1}_{u_1}\bigg|\bigg|\int_{t_j}^{t_k}\int_{t_j}^{u_2}{\rm d}\tilde{x}_{u_3}^{n,\alpha^3}{\rm d}\tilde{x}_{u_2}^{n,\alpha^2}\bigg| +\bigg|\int_{t_k}^{t_l}\int_{t_k}^{u_1}{\rm d}\tilde{x}_{u_2}^{n,\alpha^2}{\rm d}\tilde{x}^{n,\alpha^1}_{u_1}\bigg|\bigg|\int_{t_j}^{t_k}{\rm d}\tilde{x}_{u_3}^{n,\alpha^3}\bigg|\\
		\le 	&
		\left\{
		\begin{aligned}
			&Gh^{3H-1/2-\epsilon}|t_l-t_k|^{\frac12-\delta}|t_k-t_j|^{2\beta}+Gh^{2H-1/2-\epsilon}|t_l-t_k||t_k-t_j|^{\beta},\quad(|\alpha^1|,|\alpha^2|,|\alpha^3|)=(3,1,1)\\
			&G|t_l-t_k|^\beta h^{2H-1/2-\epsilon}|t_k-t_j|+Gh^{2H-1/2-\epsilon}|t_l-t_k||t_k-t_j|^\beta,\quad {\rm else}\\
		\end{aligned}
		\right.
	\end{align*}
	and 
	\begin{align*}
		\bigg|\int_{t_k}^{t_l}\int_{[u_1/n]h}^{u_1}{\rm d}\tilde{x}_{u_2}^{n,\alpha^2}{\rm d}\tilde{x}^{n,\alpha^1}_{u_1}\bigg|\bigg|\int_{t_j}^{t_k}{\rm d}\tilde{x}_{u_3}^{n,\alpha^3}\bigg|\le G\Big(\frac{t_l-t_k}{h}\Big)h^{4\beta} |t_k-t_j|^\beta.
	\end{align*}
	Then for $\frac{H}{2}+\frac18<\beta<H$ and $0<\delta<2(\beta-\frac14)$ such that $\frac12+2\beta-\delta>1$, Lemma \ref{lm5}  leads to
	\begin{align*}
		\bigg|\int_{D_3}{\rm d}\tilde{x}_{u_3}^{n,\alpha^3}{\rm d}\tilde{x}_{u_2}^{n,\alpha^2}{\rm d}\tilde{x}^{n,\alpha^1}_{u_1}\bigg|=\big|f_{t_j,t_k}\big|\le  Gh^{2H-\frac12-\epsilon}|t-s|^{\frac12+2\beta-\delta}.
	\end{align*}
	Hence, 
	\begin{align*}
		\bigg|\int_{s}^{t}\int_{s}^{u_1}\int_{s}^{u_2}{\rm d}\tilde{x}_{u_3}^{n,\alpha^3}{\rm d}\tilde{x}_{u_2}^{n,\alpha^2}{\rm d}\tilde{x}^{n,\alpha^1}_{u_1}\bigg|\le  Gh^{2H-\frac12-\epsilon}|t-s|^{3\beta}.
	\end{align*}
	
	If  $(|\alpha^1|,|\alpha^2|,|\alpha^3|)=(1,1,3)$, based on 
	\begin{align*}
		\int_{s}^{t}\int_{s}^{u_1}\int_{s}^{u_2}{\rm d}\tilde{x}_{u_3}^{n,\alpha^3}{\rm d}\tilde{x}_{u_2}^{n,\alpha^2}{\rm d}\tilde{x}^{n,\alpha^1}_{u_1}=\int_{s}^{t}\int_{u_3}^{t}\int_{u_2}^{t}{\rm d}\tilde{x}^{n,\alpha^1}_{u_1}{\rm d}\tilde{x}_{u_2}^{n,\alpha^2}{\rm d}\tilde{x}_{u_3}^{n,\alpha^3},
	\end{align*}
	the decomposition of the domain is changed to 
	\begin{align*}
		\int_{s}^{t}\int_{u_3}^{t}\int_{u_2}^{t}=&\int_{t_k}^{t}\int_{u_3}^{t}\int_{u_2}^{t}+
		\int_{t_j}^{t_k}\int_{t_k}^{t}\int_{u_2}^{t}+
		\int_{t_j}^{t_k}\int_{\lceil u_3/n\rceil h}^{t_k}\int_{u_2}^{t_k}\\
		&+\int_{t_j}^{t_k}\int_{u_3}^{\lceil u_3/n\rceil h}\int_{u_2}^{t_k}+
		\int_{t_j}^{t_k}\int_{u_3}^{t_k}\int_{t_k}^{t}+
		\int_{s}^{t_j}\int_{u_3}^{t}\int_{u_2}^{t}\\
		=&:\int_{\tilde{D}_1}+\int_{\tilde{D}_2}+\int_{\tilde{D}_3}+\int_{\tilde{D}_4}+\int_{\tilde{D}_5}+\int_{\tilde{D}_6},
	\end{align*}
	where $\lceil u_3/n\rceil=[u_3/n]+1$. Then we get for $H-\frac14<\beta<H$ and $0<\delta<\frac12-\beta$,
	\begin{align*}
		\left|\int_{\tilde{D}_1}{\rm d}\tilde{x}_{u_1}^{n,\alpha^1}{\rm d}\tilde{x}_{u_2}^{n,\alpha^2}{\rm d}\tilde{x}^{n,\alpha^3}_{u_3}\right|\le& Gh^{5\beta}\le G h^{2H-1/2-\epsilon}|t-s|^{3\beta},\\
		\left|\int_{\tilde{D}_2}{\rm d}\tilde{x}_{u_1}^{n,\alpha^1}{\rm d}\tilde{x}_{u_2}^{n,\alpha^2}{\rm d}\tilde{x}^{n,\alpha^3}_{u_3}\right|=&
		\bigg|\int_{t_j}^{t_k}{\rm d}\tilde{x}^{n,\alpha^3}_{u_3}\bigg|
		\bigg|  \int_{t_k}^{t}\int_{t_k}^{u_1}  {\rm d}\tilde{x}_{u_2}^{n,\alpha^2}{\rm d}\tilde{x}_{u_1}^{n,\alpha^1}\bigg|\\
		\le &Gh^{3H-1/2-\epsilon}|t_k-t_j|^{\frac12-\delta}|t-t_k|^{2\beta}
		\le Gh^{2H-1/2-\epsilon}|t-s|^{3\beta},\\
		\left|\int_{\tilde{D}_5}{\rm d}\tilde{x}_{u_1}^{n,\alpha^1}{\rm d}\tilde{x}_{u_2}^{n,\alpha^2}{\rm d}\tilde{x}^{n,\alpha^3}_{u_3}\right|
		= &
		\bigg|  \int_{t_j}^{t_k}\int_{t_j}^{u_2}  {\rm d}\tilde{x}_{u_23}^{n,\alpha^3}{\rm d}\tilde{x}_{u_2}^{n,\alpha^2}\bigg|\bigg|\int_{t_k}^{t}{\rm d}\tilde{x}^{n,\alpha^1}_{u_1}\bigg|\\
		\le &Gh^{2H-1/2-\epsilon}|t_k-t_j|^{2\beta}|t-t_k|^{\beta}
		\le Gh^{2H-1/2-\epsilon}|t-s|^{3\beta},\\
		\left|\int_{\tilde{D}_6}{\rm d}\tilde{x}_{u_1}^{n,\alpha^1}{\rm d}\tilde{x}_{u_2}^{n,\alpha^2}{\rm d}\tilde{x}^{n,\alpha^3}_{u_3}\right|
		\le &
		\int_{s}^{t_j}\sup_{s<u_3<t_j}\bigg|\int_{u_3}^{t}\int_{u_3}^{u_1}{\rm d}\tilde{x}_{u_2}^{n,\alpha^2}{\rm d}\tilde{x}_{u_1}^{n,\alpha^1}\bigg|{\rm d}\big|\tilde{x}^{n,\alpha^3}_{u_3}\big|\\
		\le & G|t-s|^{2\beta}\Big(\frac{t_j-s}{h}\Big)h^{3\beta}
		\le Gh^{2H-1/2-\epsilon}|t-s|^{3\beta}.
	\end{align*}	
	Furthermore, it follows from  $\lceil u_2/n\rceil=\lceil u_3/n\rceil$ for $u_3<u_2<\lceil u_3/n\rceil h$ that 
	\begin{align*}
		\left|\int_{\tilde{D}_4}{\rm d}\tilde{x}_{u_1}^{n,\alpha^1}{\rm d}\tilde{x}_{u_2}^{n,\alpha^2}{\rm d}\tilde{x}^{n,\alpha^3}_{u_3}\right|\le &
		\left|\int_{t_j}^{t_k}\int_{u_3}^{\lceil u_3/n\rceil h}\int_{\lceil u_3/n\rceil h}^{t_k}{\rm d}\tilde{x}_{u_1}^{n,\alpha^1}{\rm d}\tilde{x}_{u_2}^{n,\alpha^2}{\rm d}\tilde{x}^{n,\alpha^3}_{u_3}\right|\\
		&+
		\left|\int_{t_j}^{t_k}\int_{u_3}^{\lceil u_3/n\rceil h}\int_{u_2}^{\lceil u_3/n\rceil h}{\rm d}\tilde{x}_{u_1}^{n,\alpha^1}{\rm d}\tilde{x}_{u_2}^{n,\alpha^2}{\rm d}\tilde{x}^{n,\alpha^3}_{u_3}\right|\\
		\le&\sum_{l=j+1}^{k}\int_{t_{l-1}}^{t_l}\sup_{t_{l-1}<u_3<t_l}\bigg|\int_{u_3}^{\lceil u_3/n\rceil h}{\rm d}\tilde{x}_{u_2}^{n,\alpha^2}\bigg|\bigg|\int_{\lceil u_3/n\rceil h}^{t_k}{\rm d}\tilde{x}_{u_1}^{n,\alpha^1}\bigg|{\rm d}\big|\tilde{x}^{n,\alpha^3}_{u_3}\big|\\
		& +  \sum_{l=j+1}^{k}\int_{t_{l-1}}^{t_l}\sup_{t_{l-1}<u_3<t_l}\bigg|\int_{u_3}^{\lceil u_3/n\rceil h}\int_{u_2}^{\lceil u_3/n\rceil h}{\rm d}\tilde{x}_{u_1}^{n,\alpha^1}{\rm d}\tilde{x}_{u_2}^{n,\alpha^2}\bigg|{\rm d}\big|\tilde{x}^{n,\alpha^3}_{u_3}\big|\\
		\le &
		G\Big(\frac{t_k-t_j}{h}\Big)h^{4\beta}\big|t_k-t_j\big|^\beta+ G\Big(\frac{t_k-t_j}{h}\Big)h^{5\beta}\\
		\le &Gh^{2H-\frac12-\epsilon}|t-s|^{3\beta},
	\end{align*}
	where $\frac12H+\frac{1}{8}<\beta<H$.
	In order to estimate the last part, we define
	$$\tilde{f}_{t_j,t_k}=\int_{t_j}^{t_k}\int_{\lceil u_3/n\rceil h}^{t_k}\int_{u_2}^{t_k}{\rm d}\tilde{x}_{u_1}^{n,\alpha^1}{\rm d}\tilde{x}_{u_2}^{n,\alpha^2}{\rm d}\tilde{x}^{n,\alpha^3}_{u_3}$$ such that $\tilde{f}_{t_j,t_{j+1}}=0$ and for $j<k<l$,
	\begin{align*}
		&\big|\tilde{f}_{t_j,t_l}-\tilde{f}_{t_j,t_k}-\tilde{f}_{t_k,t_l}\big|\\
		\le & \bigg|\int_{t_j}^{t_k}\int_{u_3}^{t_k}\int_{t_k}^{t_l}{\rm d}\tilde{x}_{u_1}^{n,\alpha^1}{\rm d}\tilde{x}_{u_2}^{n,\alpha^2}{\rm d}\tilde{x}^{n,\alpha^3}_{u_3}\bigg|
		+ \bigg|\int_{t_j}^{t_k}\int_{t_k}^{t_l}\int_{u_2}^{t_l}{\rm d}\tilde{x}_{u_1}^{n,\alpha^1}{\rm d}\tilde{x}_{u_2}^{n,\alpha^2}{\rm d}\tilde{x}^{n,\alpha^3}_{u_3}\bigg|\\
		&+ \bigg|\int_{t_j}^{t_k}\int_{u_3}^{ \lceil u_3/n\rceil h  }\int_{t_k}^{t_l}{\rm d}\tilde{x}_{u_1}^{n,\alpha^1}{\rm d}\tilde{x}_{u_2}^{n,\alpha^2}{\rm d}\tilde{x}^{n,\alpha^3}_{u_3}\bigg|\\
		\le&\bigg|\int_{t_k}^{t_l}{\rm d}\tilde{x}^{n,\alpha^1}_{u_1}\bigg|\bigg|\int_{t_j}^{t_k}\int_{t_j}^{u_2}{\rm d}\tilde{x}_{u_3}^{n,\alpha^3}{\rm d}\tilde{x}_{u_2}^{n,\alpha^2}\bigg| +\bigg|\int_{t_k}^{t_l}\int_{t_k}^{u_1}{\rm d}\tilde{x}_{u_2}^{n,\alpha^2}{\rm d}\tilde{x}^{n,\alpha^1}_{u_1}\bigg|\bigg|\int_{t_j}^{t_k}{\rm d}\tilde{x}_{u_3}^{n,\alpha^3}\bigg|\\
		&+
		\bigg|\int_{t_j}^{t_k}\int_{u_3}^{\lceil u_3/n\rceil h  }{\rm d}\tilde{x}_{u_2}^{n,\alpha^2}{\rm d}\tilde{x}^{n,\alpha^3}_{u_3}\bigg|\bigg|\int_{t_k}^{t_l}{\rm d}\tilde{x}_{u_1}^{n,\alpha^1}\bigg|\\
		\le &G|t_l-t_k|^{\beta}h^{2H-1/2-\epsilon}|t_k-t_j|+G|t_l-t_k|^{2\beta}h^{3H-1/2-\epsilon}|t_k-t_j|^{\frac12-\delta}+G\Big(\frac{t_k-t_j}{h}\Big)h^{4\beta} |t_l-t_k|^\beta.
	\end{align*}
	Then for $\frac{H}{2}+\frac18<\beta<H$ and $0<\delta<2(\beta-\frac14)$ such that $\frac12+2\beta-\delta>1$, Lemma \ref{lm5}  leads to
	\begin{align*}
		\bigg|\int_{\tilde{D}_3}{\rm d}\tilde{x}_{u_3}^{n,\alpha^3}{\rm d}\tilde{x}_{u_2}^{n,\alpha^2}{\rm d}\tilde{x}^{n,\alpha^1}_{u_1}\bigg|=\big|\tilde{f}_{t_j,t_k}\big|\le  Gh^{2H-\frac12-\epsilon}|t-s|^{\frac12+2\beta-\delta}.
	\end{align*}
	As a consequence, 
	\begin{align*}
		\bigg|\int_{s}^{t}\int_{s}^{u_1}\int_{s}^{u_2}{\rm d}\tilde{x}_{u_3}^{n,\alpha^3}{\rm d}\tilde{x}_{u_2}^{n,\alpha^2}{\rm d}\tilde{x}^{n,\alpha^1}_{u_1}\bigg|\le  Gh^{2H-\frac12-\epsilon}|t-s|^{3\beta},\quad |\alpha^3|=3,~|\alpha^1|=|\alpha^2|=1,
	\end{align*}
	which finishes the proof.
\end{proof}

\bibliographystyle{plain}
\bibliography{bib}

\end{document}